\numberwithin{equation}{section}
\newtheorem{theorem}{Theorem}[section]
\newtheorem{definition}{Definition}[section]
\newtheorem{lemma}[theorem]{Lemma}
\newtheorem{proposition}[theorem]{Proposition}
\newtheorem{corollary}[theorem]{Corollary}
\begin{document}
\begin{center}
{\Large{\textbf{{Characteristics of the Jaco Graph, $J_\infty(a), a \in \Bbb N$}}}} 
\end{center}
\vspace{0.5cm}
\large{
\centerline{(Johan Kok, Paul Fisher, Bettina Wilkens, Mokhwetha Mabula, Vivian Mukungunugwa)\footnote {\textbf {Affiliation of authors:}\\
\noindent Johan Kok (Tshwane Metropolitan Police Department), City of Tshwane, Republic of South Africa\\
e-mail: kokkiek2@tshwane.gov.za\\ \\
\noindent Paul Fisher (Department of Mathematics, University of Botswana), City of Gaborone, Republic of Botswana\\
e-mail: paul.fisher@mopipi.ub.bw\\ \\
\noindent Bettina Wilkens (Department of Mathematics, University of Botswana), City of Gaborone, Republic of Botswana \\
e-mail: wilkensb@mopipi.ub.bw\\ \\
\noindent Mokhwetha Mabula (Department of Mathematics, University of Pretoria), City of Tshwane, Republic of South Africa\\ 
e-mail: mokhwetha.Mabula@up.ac.za\\ \\
\noindent Vivian Mukungunugwa (Department of Mathematics and Applied Mathematics, University of Zimbabwe), City of Harare, Republic of Zimbabwe\\
e-mail: vivianm@maths.uz.ac.zw}}
\vspace{0.5cm}
\begin{abstract}
\noindent  We introduce the concept of a family of finite directed graphs (\emph{order a}) which are directed graphs derived from a infinite directed graph (\emph{order a}), called the $a$-root digraph. The $a$-root digraph has four fundamental properties which are; $V(J_\infty(a)) =\{v_i|i \in \Bbb N\}$ and, if $v_j$ is the head of an edge (arc) then the tail is always a vertexc $v_i, i<j$ and, if $v_k$ for smallest $k \in \Bbb N$ is a tail vertex then all vertices $v_ \ell, k< \ell<j$ are tails of arcs to $v_j$ and finally, the degree of vertex $k$ is $d(v_k) = ak.$ The family of finite directed graphs are those limited to $n \in \Bbb N$ vertices by lobbing off all vertices (and edges arcing to vertices) $v_t, t > n.$ Hence, trivially we have $d(v_i) \leq ai$ for $i \in \Bbb N.$ We present an interesting Lucassian-Zeckendorf result and other general results of interest. It is meant to be an \emph {introductory paper} to encourage exploratory research. \\ \\
\end{abstract}
\noindent {\footnotesize \textbf{Keywords:} Jaco graph, Hope graph, Directed graph, Jaconian vertex, Jaconian set, Number of edges, Shortest path, Fibonacci sequence, Zeckendorf representation}\\ \\
\noindent {\footnotesize \textbf{AMS Classification Numbers:} 05C07, 05C12, 05C20, 11B39} 
\section{Introduction}
\noindent  We introduce the concept of a family of finite Jaco Graphs (\emph{order a}) which are directed graphs derived from the infinite Jaco Graph (\emph{order a}), called the $a$-root digraph. The $a$-root digraph has four fundamental properties which are; $V(J_\infty(a)) =\{v_i|i \in \Bbb N\}$ and, if $v_j$ is the head of an edge (arc) then the tail is always a vertexc $v_i, i<j$ and, if $v_k$ for smallest $k \in \Bbb N$ is a tail vertex then all vertices $v_ \ell, k< \ell<j$ are tails of arcs to $v_j$ and finally, the degree of vertex $k$ is $d(v_k) = ak.$ 
\begin{definition}
The family of infinite Jaco Graphs denoted by $\{J_\infty(a)| a\in \Bbb N\}$ is defined by $V(J_\infty(a)) = \{v_i| i \in \Bbb N\}$, $E(J_\infty(a)) \subseteq \{(v_i, v_j)| i, j \in \Bbb N, i< j\}$ and $(v_i,v_ j) \in E(J_\infty(a))$ if and only if $(a + 1)i - d^-(v_i) \geq j.$
\end{definition}
\begin{definition}
\noindent For $a \in \Bbb N,$ we define the series $(c_{a,n})_{n \in \Bbb N_0}$ by
\begin{center} $c_{a, 0} = 0,$ $c_{a,1} = 1,$ $c_{a,n}= \min \{ k < n| ak + c_{a, k} \geq n \}$ ($n \geq 2)$.
\end{center}
\end{definition}
\noindent The connection between the $a$-root digraph $J_\infty(a)$ and the series $(c_{a,n})$ is explained by the following lemma.
\begin{lemma} 
Consider the Jaco Graph $J_\infty(a)$ and let $n \in \Bbb N$ then the following hold:\\
(a) $d^+(v_n) + d^-(v_n) = an.$\\
(b) $d^-(v_{n+1}) \in \{ d^- (v_n),\, d^-(v_n) + 1 \}.$\\
(c) If $(v_i, v_k) \in E(J_\infty(a))$ and $i < j < k,$ then $(v_j, v_k) \in E(J_\infty(a)).$\\
(d) $d^+ (v_n) = (a-1)n + c_{a,n}.$
\end{lemma}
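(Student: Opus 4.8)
The plan is to reduce everything to the behaviour of the auxiliary function $f(i) := (a+1)i - d^-(v_i)$. Note first that although Definition 1.1 specifies $E(J_\infty(a))$ only implicitly, $d^-(v_i)$ depends solely on which pairs $(v_h,v_i)$ with $h<i$ are edges, hence only on $d^-(v_1),\dots,d^-(v_{i-1})$; so the digraph, and $f$, are well defined by recursion, and $(v_i,v_j)\in E(J_\infty(a))$ if and only if $i<j\le f(i)$. The trivial bound $d^-(v_n)\le n-1$ (in-neighbours of $v_n$ lie among $v_1,\dots,v_{n-1}$) then yields $f(n)\ge (a+1)n-(n-1)=an+1\ge n+1$ for all $n$, so $v_n$ always has an out-neighbour. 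Part (a) follows at once: $d^+(v_n)=|\{j:n<j\le f(n)\}|=f(n)-n=an-d^-(v_n)$, hence $d^+(v_n)+d^-(v_n)=an$.

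The core of the lemma is part (b), from which (c) will drop out. Here I would compute $d^-(v_{n+1})-d^-(v_n)$ directly. Writing $d^-(v_m)=|\{\,i: 1\le i\le m-1,\ f(i)\ge m\,\}|$ and comparing the cases $m=n+1$ and $m=n$ --- using $f(n)\ge n+1$ so that the index $i=n$ does contribute to $d^-(v_{n+1})$ --- one arrives at the identity $d^-(v_{n+1})-d^-(v_n)=1-|\{\,i\le n-1: f(i)=n\,\}|$. Since the right side is at most $1$, $f(n+1)-f(n)=(a+1)-(d^-(v_{n+1})-d^-(v_n))\ge a\ge 1$, so $f$ is strictly increasing --- and this needed no induction. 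Strict monotonicity makes $f$ injective, so $|\{i\le n-1: f(i)=n\}|\le 1$, and the identity now gives $d^-(v_{n+1})-d^-(v_n)\in\{0,1\}$, which is (b). For (c): if $(v_i,v_k)\in E$ and $i<j<k$ then $k\le f(i)\le f(j)$ by monotonicity while $j<k$, so $(v_j,v_k)\in E$.

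For part (d), by (a) it suffices to show $d^-(v_n)=n-c_{a,n}$ for every $n\ge 1$, which I would prove by strong induction on $n$. The case $n=1$ is $0=1-c_{a,1}$. For $n\ge 2$ the inductive hypothesis gives $f(i)=(a+1)i-d^-(v_i)=ai+c_{a,i}$ for $1\le i\le n-1$; since $f$ is strictly increasing on this range and $f(n-1)=a(n-1)+c_{a,n-1}\ge n$ (as $c_{a,n-1}\ge 1$), the set $\{i: 1\le i\le n-1,\ f(i)\ge n\}$ has the form $\{m,m+1,\dots,n-1\}$ with $m=\min\{k<n: ak+c_{a,k}\ge n\}$, which is exactly $c_{a,n}$. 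Hence $d^-(v_n)=n-m=n-c_{a,n}$, and $d^+(v_n)=an-d^-(v_n)=(a-1)n+c_{a,n}$.

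The main obstacle is part (b): getting the set-subtraction bookkeeping exactly right to produce $d^-(v_{n+1})-d^-(v_n)=1-|\{i\le n-1: f(i)=n\}|$, and noticing that this identity already forces $f$ to be strictly increasing, which then supplies the injectivity needed to finish. Parts (a), (c), (d) are essentially mechanical once $f$ is under control; the only extra care is to verify the elementary facts that $c_{a,m}\ge 1$ for $m\ge 1$ and that the minimum defining $c_{a,n}$ is actually attained, both of which follow from $k=n-1$ being an admissible choice.
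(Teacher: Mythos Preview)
Your argument is correct, and for parts (b) and (c) it follows a genuinely different route from the paper. The paper proves (b) and (c) together by a simultaneous induction: assuming both hold below $n$, it identifies the smallest in-neighbour $v_\ell$ of $v_n$, uses the inductive bound $d^-(v_\ell)\le d^-(v_j)\le d^-(v_\ell)+j-\ell$ to force every $v_j$ with $\ell\le j<n$ to be an in-neighbour, and then tracks whether $\ell$ or $\ell+1$ is the smallest in-neighbour of $v_{n+1}$. You instead derive the set-theoretic identity $d^-(v_{n+1})-d^-(v_n)=1-|\{i\le n-1:f(i)=n\}|$ directly from the definition; the upper bound $\le 1$ alone forces $f$ to be strictly increasing, and injectivity of $f$ then feeds back to give the lower bound $\ge 0$. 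This bootstrap avoids induction entirely for (b) and (c), and (c) becomes a one-line consequence of the monotonicity of $f$. Your approach is cleaner and more self-contained; the paper's argument, by contrast, makes the structure of the in-neighbourhood (namely that it is the interval $\{\ell,\dots,n-1\}$) more visible along the way, which it then reuses in the proof of (d).

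For (a) and (d) the two proofs are essentially the same: (a) is immediate from the definition, and (d) is strong induction identifying the least in-neighbour of $v_n$ with $c_{a,n}$. Your remark that $c_{a,m}\ge 1$ and that $k=n-1$ witnesses the minimum are exactly the small checks needed; the paper tacitly assumes them.
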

\begin{proof} 
As $d^+(v_n) = (a+1)n - n - d^-(v_n) ,$ result $(a)$ is obvious. We prove result $(b)$ and $(c)$ simultaneously, using induction on $n.$ First of all, $d^-(v_1) = 0$ implying $d^-(v_2) = 1 = d^-(v_1) + 1.$\\ \\
Let $n \geq 2$ and assume $(b)$ to hold for $m \leq n$ and $(c)$ to hold for $m \leq n-1.$ In particular, $ d^-(v_n) > 0.$ Let $\ell <n$ be minimal with $(v_\ell, v_n) \in E(J_\infty(a)),$ i.e. $(a+1)\ell - d^-(v_\ell) \geq n.$ Let $\ell < j < n.$ By induction,we have
$d^-(v_\ell) \leq d^-(v_j) \leq d^-(v_\ell) + j - \ell$ and $(a+1)j - d^-(v_j)   \geq n.$ Hence, and by choice of $\ell,$ we have $(v_k, v_n) \in E(J_\infty(a))$ if and only if $\ell \leq k < n,$ hence  result $(c)$ is is valid for $n,$ while $d^-(v_n) = n - \ell$ and $d^+(v_n) = an - (n - \ell) = (a-1)n + \ell.$\\ \\
If $(a+1) \ell - d^-(v_\ell) \geq n + 1,$ then $\ell$ is minimal with $(a+1) \ell - d^-(v_\ell) \geq n + 1.$ If $(a+1) \ell - d^-(v_\ell) = n,$ we still, as $\ell + 1 \leq n,$ have $d^- (v_{\ell +1}) \in  \{ d^- (v_\ell),\, d^-(v_{\ell) + 1} \}$ and $(a+1)(\ell + 1) - d^-(v_{\ell +1}) \geq n+1.$  Either way, $(v_{\ell + 1}, v_{n+1}) \in E(J_\infty(a)).$ If $\ell + 1 < j < n,$ then induction yields $d^-(v_{\ell +1}) \leq d^-(v_j) \leq d^- (v_{\ell +1}) + (j - \ell -1) $ and $(a+1) j - d^-(v_j)   \geq n + 1.$ As $d^-(v_n) \leq n-1, (v_n, v_{n+1}) \in E(J_\infty(a)),$ so $(v_k, v_{n+1}) \in E(J_\infty(a))$ whenever $\ell +1 \leq k \leq n.$ Depending on whether $(v_\ell, v_{n+1}) \in E(J_\infty(a))$ or not, we obtain $d^-(v_{n+1}) = n+1 - \ell = (n-\ell) + 1 = d^- (v_\ell) + 1$ or $d^-(v_{n+1}) = n+1 - (\ell+1) = d^- (v_n).$\\ \\
Let $n \geq 3,$ and, as before, choose $\ell$ minimal with $(v_\ell, v_n) \in E(J_\infty(a)).$ We prove result $(d)$ by induction on $n$ and apply arguments very similar to the ones already used: First of all, $d^-(v_1) = 0,$ and $d^+ (v_1) = a = (a-1)1 + c_{a,1}.$ Now let $n > 1,$ and, as before, choose $\ell$  minimal such that $(v_\ell, v_n) \in E(J_\infty(a)).$ By $(a)$ and $(c),$ $d^- (v_n) = n - \ell,$ and $d^+(v_n) = an - n + \ell = (a-1)n + \ell.$ Induction   yields that $d^+(v_k) = a_k$ whenever $k < n.$  The definition of $\ell$ says that  $\ell$ is minimal with $\ell + d^+(v_\ell) = \ell + (a-1) \ell + c_{a,\ell} \geq n,$ which means that $\ell  = c_{a,n}.$
\end{proof}
\begin{corollary}
Note that $(a)$ and $(b)$ of Lemma 1.1 entail that $d^-(v_{n+1}) = (n+1) - c_{a, n+1} \in \{ n - c_{a,n}, n - c_{a, n} + 1\}$ and that $(d)$ then implies that the series  $(c_{a,n})$ are well-defined and ascending, more specifically, $c_{a,n+1} \in \{ c_{a,n}, c_{a,n} + 1 \}$ ($n \in \Bbb N_0$).
\end{corollary}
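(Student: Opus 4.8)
The plan is to obtain both assertions from Lemma 1.1 by a one-line computation plus a short induction, the only real care required being the order in which things are proved, so as to avoid circularity.

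\emph{Step 1: the in-degree formula.} Combining Lemma 1.1(a), $d^+(v_n)+d^-(v_n)=an$, with Lemma 1.1(d), $d^+(v_n)=(a-1)n+c_{a,n}$, gives $d^-(v_n)=an-(a-1)n-c_{a,n}=n-c_{a,n}$ for every $n\in\Bbb N$ (the values $n=1,2$ match $c_{a,1}=c_{a,2}=1$ directly). Substituting $n+1$ for $n$ and then invoking Lemma 1.1(b),
\[
(n+1)-c_{a,n+1}=d^-(v_{n+1})\in\{\,d^-(v_n),\ d^-(v_n)+1\,\}=\{\,n-c_{a,n},\ n-c_{a,n}+1\,\},
\]
which is the first displayed claim. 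Subtracting each term from $n+1$ converts this into $c_{a,n+1}\in\{\,c_{a,n}+1,\ c_{a,n}\,\}$; adjoining the base case $c_{a,1}=1\in\{0,1\}=\{c_{a,0},c_{a,0}+1\}$ shows that $(c_{a,n})$ is nondecreasing, i.e. ascending.

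\emph{Step 2: well-definedness.} Here one must show the set $S_n:=\{\,k<n\mid ak+c_{a,k}\geq n\,\}$ is nonempty whenever $n\geq 2$, so that $c_{a,n}=\min S_n$ makes sense. I would do this by induction on $n$, simultaneously with the monotonicity of Step 1: assuming $c_{a,m}$ is defined and $c_{a,m}\geq 1$ for $1\leq m<n$, the index $k=n-1$ already lies in $S_n$, because $a\geq 1$ and $n\geq 2$ force $a(n-1)+c_{a,n-1}\geq (n-1)+1=n$. Alternatively — and this is what the phrase ``(d) then implies'' seems to point to — one can simply quote the proof of Lemma 1.1(d): the index $\ell$ appearing there is defined purely graph-theoretically (the smallest index with $(v_\ell,v_n)\in E(J_\infty(a))$), it exists because $d^-(v_n)>0$ for $n\geq 2$, and that proof shows $\ell=\min S_n$; hence $S_n\neq\emptyset$ and $c_{a,n}=\ell$.

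I do not expect a genuine obstacle: the corollary is essentially the algebraic shadow of Lemma 1.1. The one subtle point is logical hygiene — Lemma 1.1(d) already names $c_{a,n}$, so the well-definedness argument must not lean on the \emph{statement} of (d) (which would be circular) but either on the self-contained induction above or on the \emph{graph-theoretic} quantity $\ell$ inside its proof. The remaining work is just verifying the small cases $n\in\{0,1,2\}$, where Definition 1.2 imposes initial values rather than deriving them.
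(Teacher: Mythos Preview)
Your proposal is correct and matches the paper's intent: the paper gives no separate proof of this corollary, treating it as an immediate consequence of Lemma~1.1, and your Step~1 is exactly the one-line computation the paper has in mind. Your Step~2 on well-definedness (and the remark on logical hygiene) actually goes a bit beyond the paper, which is silent on the potential circularity; your observation that the graph-theoretic $\ell$ in the proof of Lemma~1.1(d) furnishes a noncircular witness for $S_n\neq\emptyset$ is the right way to read the phrase ``(d) then implies.''
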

\begin{lemma}
Let $k \in \Bbb N,$ and $0 \leq b < a .$ Then $c_{a,ak + c_{a,k} - b} = k.$
\end{lemma}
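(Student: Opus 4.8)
The plan is to read the identity straight off the recursive definition of $(c_{a,n})$, with no induction: everything will follow once the sequence is known to be non-decreasing, which is exactly the content of the Corollary (equivalently of Lemma~1.1(b)). The device is the auxiliary function $f(n):=an+c_{a,n}$ for $n\in\Bbb N_0$; in the $a$-root digraph this is $f(n)=n+d^+(v_n)$, the largest index occurring as an out-neighbour of $v_n$, and the defining clause for $c$ reads, for $m\ge 2$,
\[
c_{a,m}=\min\{\,j<m:\ f(j)\ge m\,\}.
\]
Because $a\ge 1$ and $(c_{a,n})$ is non-decreasing, $f$ is strictly increasing, with consecutive gaps $f(n)-f(n-1)=a+\bigl(c_{a,n}-c_{a,n-1}\bigr)\ge a$.

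Now fix $k\in\Bbb N$ and $0\le b<a$, and put $N:=ak+c_{a,k}-b=f(k)-b$. First I would check that $N$ falls in the range where the recursive clause applies: since $c_{a,k}\ge c_{a,1}=1$ and $b\le a-1$, one gets $N\ge a(k-1)+2\ge k+1$, so $N\ge 2$ and $N>k\ge 1$. Hence $c_{a,N}=\min\{\,j<N:\ f(j)\ge N\,\}$. The index $j=k$ belongs to this set, as $f(k)=N+b\ge N$ and $k<N$. On the other hand, for any $j$ with $0\le j\le k-1$, monotonicity of $f$ and of $(c_{a,n})$ gives
\[
f(j)\le f(k-1)=a(k-1)+c_{a,k-1}\le a(k-1)+c_{a,k}=ak-a+c_{a,k}<ak-b+c_{a,k}=N,
\]
the strict step being precisely the hypothesis $b<a$. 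Thus no $j<k$ lies in the set, the minimum equals $k$, and $c_{a,N}=k$, as claimed.

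I do not anticipate a genuine obstacle; the argument is short bookkeeping and the only structural input is the monotonicity of $(c_{a,n})$ from the Corollary. The points that need a moment's care are: confirming $N\ge 2$ so that the recursive clause (and not the base values $c_{a,0},c_{a,1}$) governs $c_{a,N}$, and noting that allowing $j=0$ in the minimisation is harmless since $f(0)=0<N$. Conceptually the statement just records that the integers $m$ with $c_{a,m}=k$ form the block $\{f(k-1)+1,\dots,f(k)\}$, of length $f(k)-f(k-1)\ge a$, and that $f(k),f(k)-1,\dots,f(k)-(a-1)$ are the top of that block — which is why the range $0\le b<a$ is exactly the right one.
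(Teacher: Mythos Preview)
Your proposal is correct and follows essentially the same route as the paper: both arguments set $\ell=ak+c_{a,k}-b$, observe that $k$ lies in the defining set so $c_{a,\ell}\le k$, and then use the monotonicity of $(c_{a,n})$ from Corollary~1.2 to get $a(k-1)+c_{a,k-1}\le a(k-1)+c_{a,k}<\ell$, forcing $c_{a,\ell}\ge k$. Your version is simply more explicit about the auxiliary function $f$ and the edge checks ($N\ge 2$, $N>k$), which the paper leaves implicit.
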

\begin{proof}
Let $ak + c_{a,k} - b = \ell.$ Certainly, $ak + c_{a, k} \geq \ell$ i.e. $c_\ell \leq k.$  On the other hand,
$a(k-1) + c_{a,\, k} = ak + c_{a,\,k} - a < \ell,$ so Corollary 1.2 says $a(k-1) + c_{a, k-1} < \ell$ and $c_\ell = k.$
\end{proof}
\noindent Recall that the \emph{generalised Lucas sequence} $U_n(a,\, -1)$ is defined by \\
\begin{center} $U_0 = 0,$ $U_1 = 1,$\\
$U_{n+1} = a U_n + U_{n-1}.$
\end{center}
It is well known that $U_n = \frac{r^n - s^n}{r-s},$ where $r = \frac{a}{2} + \sqrt{\frac{a^2}{4} +1},$ $s =  \frac{a}{2} - \sqrt{\frac{a^2}{4} + 1}.$

\noindent We are  going to require a probably well-known (and not hard to prove) theorem, which in the case $a = 1$ is known as \emph {Zeckendorf's theorem}.
\begin{lemma}
Let $n \in \Bbb N$ and let $U_0, U_1, \ldots$ be the terms of the Lucas sequence $U(a, -1).$ Then $n$ may be uniquely expressed by a sum
$n = \sum\limits_{i \in \Bbb N} \alpha_i U_i,$ where\\
$0 \leq \alpha_1 < a,$ $0 \leq \alpha_i \leq a$ ($i > 2$), and  $\alpha_i = a$ only if $\alpha_{i-1} = 0$ ($i \in \Bbb N).$
\end{lemma}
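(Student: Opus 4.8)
The plan is to read the claimed expansion as a positional numeration system attached to the recurrence $U_{i+1}=aU_i+U_{i-1}$, treating existence and uniqueness separately, with both resting on a single auxiliary bound. Call a finite tuple $(\alpha_1,\ldots,\alpha_m)$ of non-negative integers \emph{admissible} if $\alpha_1<a$, $\alpha_i\le a$ for $2\le i\le m$, and $\alpha_i=a$ forces $\alpha_{i-1}=0$; note that every truncation $(\alpha_1,\ldots,\alpha_\ell)$ with $\ell\le m$ of an admissible tuple is again admissible. The bound to establish first is: for every $m\ge 1$ and every admissible $(\alpha_1,\ldots,\alpha_m)$ one has $\sum_{i=1}^{m}\alpha_iU_i\le U_{m+1}-1$, with equality attained for each $m$. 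I would prove it by induction on $m$ (checking $m=1,2$ directly), splitting on the top digit: if $\alpha_m\le a-1$ then $\sum_{i\le m}\alpha_iU_i\le(a-1)U_m+(U_m-1)=aU_m-1\le U_{m+1}-1$ using the inductive bound on $(\alpha_1,\ldots,\alpha_{m-1})$; if $\alpha_m=a$ then $\alpha_{m-1}=0$, so $\sum_{i\le m}\alpha_iU_i=aU_m+\sum_{i\le m-2}\alpha_iU_i\le aU_m+(U_{m-1}-1)=U_{m+1}-1$ using the inductive bound on $(\alpha_1,\ldots,\alpha_{m-2})$. Equality is witnessed by the obvious greedy admissible tuple.

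Uniqueness then drops out. Suppose two distinct admissible tuples, padded with zeros to a common length, represent the same $n$, and let $j$ be the largest index at which they differ, with digits $\alpha_j>\beta_j$ there. Cancelling the agreeing terms of index $>j$ leaves $(\alpha_j-\beta_j)U_j=\sum_{i<j}(\beta_i-\alpha_i)U_i\le\sum_{i<j}\beta_iU_i\le U_j-1$, the last inequality being the auxiliary bound applied to the admissible truncation $(\beta_1,\ldots,\beta_{j-1})$; but $(\alpha_j-\beta_j)U_j\ge U_j$, a contradiction. Hence each $n$ has at most one admissible representation.

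For existence I would run the greedy algorithm. Given $n\in\Bbb N$, let $m$ be maximal with $U_m\le n$ and write $n=qU_m+r$ with $0\le r<U_m$; from $qU_m\le n<U_{m+1}\le(a+1)U_m$ we get $q\le a$, and when $q=a$ we moreover have $r<U_{m+1}-aU_m=U_{m-1}$. An inductively constructed admissible expansion of $r$ is automatically supported on indices $<m$ (a non-zero digit at index $t$ would force $r\ge U_t$), and on indices $\le m-2$ in the case $q=a$. Prepending $\alpha_m=q$ then yields an admissible tuple: the only rule that could fail is ``$\alpha_m=a\Rightarrow\alpha_{m-1}=0$'', and it holds exactly because $r<U_{m-1}$ forces $\alpha_{m-1}=0$ in the expansion of $r$. (Alternatively one may count the admissible tuples of length $m$ by a two-state recursion, find exactly $U_{m+1}$ of them, and combine this with the injectivity above and the bound $\sum_{i\le m}\alpha_iU_i\le U_{m+1}-1$ to see that they biject with $\{0,1,\ldots,U_{m+1}-1\}$.)

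I expect the real work to be in stating and proving the auxiliary bound cleanly, since both existence and uniqueness are leveraged entirely off it, and in the bookkeeping of the greedy step around the extremal digit $q=a$, i.e.\ verifying that the carry-type restriction propagates correctly down the recursion. A short remark should also handle $a=1$, where $U_1=U_2=1$ and $\alpha_1<a$ forces $\alpha_1=0$, so that the sum from $i=1$ collapses to the classical Zeckendorf expansion over $U_2,U_3,\ldots$.
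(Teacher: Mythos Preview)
Your argument is correct and in fact more complete than the paper's own proof. Both approaches run the greedy algorithm for existence---pick the largest $U_m\le n$, strip off $qU_m$, and recurse---but you organise everything around the explicit inequality $\sum_{i\le m}\alpha_iU_i\le U_{m+1}-1$ for admissible tuples, and this is what buys you a clean separate uniqueness proof via the highest-differing-digit trick. The paper instead tries to show directly that for $n>a$ the top pair $(i,\alpha_i)$ is forced, and then invokes induction on $n-\alpha_iU_i$; it never isolates the bound you prove, and it leaves entirely to the reader both the verification that the remainder admits an admissible expansion with the carry rule $\alpha_i=a\Rightarrow\alpha_{i-1}=0$ intact and the global uniqueness of the representation (the displayed inequalities $aU_j<bU_i$ only bound a single lower term, not the full lower tail). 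Your treatment of the extremal digit $q=a$ via $r<U_{m-1}$ is exactly the missing bookkeeping, and your side remark that admissible length-$m$ tuples biject with $\{0,\ldots,U_{m+1}-1\}$ is a pleasant dividend of having the bound stated sharply.
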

\begin{proof}
Induction on $n.$ If $b < a,$ then "$b = b U_1$" is a representation of the required kind, which is clearly unique, as is "$a = U_2.$"\\ \\
Let $0 < b < a,$ and let $n,\, i \in \Bbb N, i \geq 3.$ If $b U_i < n \leq (b+1) U_i,$ then $a U_j < b U_i $ whenever $j < i,$ while $U_{i+1} > a U_i > n.$ If $b U_2 < n \leq (b+1) U_2$ with $b \geq 2,$ then $aU_1 < n< U_3,$ while $(a-1) U_1 < U_2 < U_3.$ It follows that, if $a < n \in \Bbb N,$ then there is a uniquely determined natural number  $i$ for which there exists $\alpha_i \in \{1, \ldots, a \}$ with $(\alpha - 1) U_i < n \leq \alpha_i U_i.$ Now apply induction to  $m = n - \alpha_i U_i.$ \\ \\
Let $n \in \Bbb N,$ and take $n$ as expressed by a sum $n =  \sum\limits_{i \in \Bbb N} \alpha_i U_i$  where the coefficients satisfy the conditions of Lemma 1.3. Define the function $\tau: \Bbb N \rightarrow \{ 0, 1\}$ as follows:\\
 \begin{center} $\tau(n) = \begin{cases}   0&\mbox{if}\, \, \alpha_1 = 0\\
 1 & \mbox{if} \, \alpha_1 > 1\\
 \frac{1 + (-1)^{i+1}}{2} & \mbox{if}\, \, 1 = \alpha_1 = \ldots = \alpha_i,\, \alpha_{i+1} = 0\\
\frac{1 + (-1)^i}{2}  &\mbox{if}\,\, 1 = \alpha_1 = \ldots = \alpha_i < \alpha_{i+1} \end{cases}.$
\end{center}
For $m \in \Bbb N,$ set $c_{a,m} = b_m.$
\end{proof}
\begin{theorem}
Let $n \in \Bbb N,$ $n = \sum\limits_{i \in \Bbb N}\alpha_i U_i$ where the requirements of Lemma 1.4 are assumed to be met. Then $b_n =  \sum\limits_{i \in \Bbb N}\alpha_i U_{i-1} + \tau(n).$
\end{theorem}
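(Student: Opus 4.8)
The plan is to prove the formula by strong induction on $n$. The initial values $1\le n\le a+1$ (where the representation is $n=\alpha_1U_1$, or $n=U_2$, or $n=U_2+U_1$) are checked directly against Definition 1.2, using the evaluations of $c_{a,\cdot}$ obtained along the way in Lemma 1.1; for instance $c_{a,m}=1$ for $2\le m\le a+1$ and $\tau$ is read off from its four clauses.

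The algebraic engine of the induction is the observation that, from $U_{i+1}=aU_i+U_{i-1}$,
\[
\sum_{i\in\Bbb N}\alpha_iU_{i+1}=a\sum_{i\in\Bbb N}\alpha_iU_i+\sum_{i\in\Bbb N}\alpha_iU_{i-1}=an+\sum_{i\in\Bbb N}\alpha_iU_{i-1}.
\]
Moreover the shifted coefficient string $(0,\alpha_1,\alpha_2,\dots)$ again satisfies the constraints of Lemma 1.4, so by the uniqueness in that lemma the integer $an+\sum_i\alpha_iU_{i-1}$ has exactly that representation. This identity is the bridge that converts statements about $\sum_i\alpha_iU_{i-1}$ into statements about index-shifts of Zeckendorf representations, and it is also what forces $\sum_i\alpha_iU_{i-1}$ to be the ``right'' first approximation to $b_n$.

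Next I would record the block form of $c_{a,\cdot}$: setting $B(k):=ak+c_{a,k}$, the map $k\mapsto B(k)$ is strictly increasing, so by Definition 1.2 (equivalently, by Lemma 1.3 together with Corollary 1.2) one has $c_{a,m}=k$ exactly when $B(k-1)<m\le B(k)$. Consequently, to prove $b_n=k^{\ast}$ where $k^{\ast}:=\sum_i\alpha_iU_{i-1}+\tau(n)$, it suffices to verify the two inequalities
\[
a(k^{\ast}-1)+b_{k^{\ast}-1}<n\le ak^{\ast}+b_{k^{\ast}}.
\]
Since $k^{\ast}<n$ for $n\ge 2$, the induction hypothesis lets me replace $b_{k^{\ast}}$ and $b_{k^{\ast}-1}$ by their claimed closed forms and then expand $ak^{\ast}+b_{k^{\ast}}$ via the identity above; after this the two inequalities reduce to a comparison of the Lucas-sum representing $n$ with Lucas-sums built from the representations of $k^{\ast}$ and $k^{\ast}-1$.

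The delicate point — and the reason $\tau$ has the shape it does — is the carry behaviour dictated by the leading coefficients. When $\alpha_1=0$ the shift identity essentially hands over $b_n=\sum_i\alpha_iU_{i-1}$ with $\tau(n)=0$; when $\alpha_1\ge 2$ one extra unit is needed; and when $n$ begins with a run $1=\alpha_1=\cdots=\alpha_j$, passing from $n$ to $k^{\ast}$ (and to $k^{\ast}-1$) shortens or reshapes that run through regroupings coming from $U_{i+1}=aU_i+U_{i-1}$, so that one must check, in each of the four clauses of the definition of $\tau$ — according to the parity of $j$ and to whether the run ends with $\alpha_{j+1}=0$ or with $\alpha_{j+1}>1$ — that the correction term stays in $\{0,1\}$ and is exactly the one prescribed by $\tau(n)$, $\tau(k^{\ast})$ and $\tau(k^{\ast}-1)$. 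I expect this carry bookkeeping, rather than any single estimate, to be the main obstacle; everything else is the routine arithmetic of the recurrence $U_{i+1}=aU_i+U_{i-1}$.
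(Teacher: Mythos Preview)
Your strategy is the paper's strategy: strong induction on $n$, with the Lucas recurrence $U_{i+1}=aU_i+U_{i-1}$ collapsing $a\cdot(\text{down-shift})+(\text{double down-shift})$ back to $n$, followed by a case analysis on the leading digits that matches the clauses defining $\tau$. Two points of execution are worth comparing.

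First, the paper does not verify the two-sided block inequality $B(k^\ast-1)<n\le B(k^\ast)$. Doing so would force you to produce the Lucas--Zeckendorf representation of $k^\ast-1$, and subtracting $1$ can cascade badly. Instead, when $\alpha_1>0$ the paper applies induction only to the single auxiliary number $m=\sum_{i\ge 2}\alpha_iU_{i-1}$ (whose representation \emph{is} the clean down-shift, since $\alpha_1>0$ forces $\alpha_2<a$), obtains $n-\alpha_1\le am+b_m\le n-\alpha_1+1$, and then uses only the monotonicity $b_{m+1}\ge b_m$ from Corollary~1.2 to conclude $b_n\in\{m,m+1\}$. The $\tau$-subcases then decide which. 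This sidesteps the ``representation of $k^\ast-1$'' problem entirely and is what you should aim for; your current plan makes that carry bookkeeping harder than necessary.

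Second, your treatment of the case $\alpha_1=0$ is too quick. You write that ``the shift identity essentially hands over $b_n=\sum_i\alpha_iU_{i-1}$'', but to apply the induction hypothesis to $k=\sum_{i\ge2}\alpha_iU_{i-1}$ you need its representation, and the down-shift $(\alpha_2,\alpha_3,\dots)$ is a legal Lemma~1.4 representation only when $\alpha_2<a$. When $\alpha_1=0$ and $\alpha_2=a$ the down-shift is illegal, and the paper handles this by a separate argument: it regroups $\sum_{i=1}^{j}aU_{2i}=U_{2j+1}-1$, passes to the representation of $n+1$, and invokes Lemma~1.3 on that (with a further special sub-argument when $a=1$). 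Your outline does not anticipate this case; it is not covered by any of the four $\tau$-clauses you list, and it is the one place where ``routine arithmetic of the recurrence'' is not enough.
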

\begin{proof}
We proceed by induction on $n.$ Note that $b_1 = \ldots = b_{a-1} = 1 = U_0 + 1,$ while $b_a = b_{a+1} = U_1.$ Now let $a < n =  \sum\limits_{i \in \Bbb N}\alpha_i U_i.$ First suppose that $\alpha_1 = 0$ and $\alpha_2 < a.$ Let $k = \sum\limits_{i \in \Bbb N} \alpha_i U_{i-1}.$ Letting  $\beta_i = \alpha_{i+1} ( i \in \Bbb N),"k =   \sum\limits_{j \in \Bbb N}\beta_j U_j$ " is a representation meeting the conditions of Lemma 1.4, hence induction yields $b_k = \sum\limits_{i \in \Bbb N} \alpha_i U_{i-2} + \tau(k).$ Now $n = a( \sum\limits_{i \in \Bbb N} \alpha_i U_{i-1}) + \sum\limits_{i \in \Bbb N} \alpha_i U_{i-2} = k + b_k - \tau(k),$ and, as $\tau(k) < a,$ Lemma 1.3 yields $k = b_n.$\\ \\
Now assume that $\alpha_1 = 0$ and $\alpha_2 = a.$ We consider the case $a = 1$ separately and first.If $a = 1,$ then $\alpha_2 = 1$ forces $\alpha_3 = 0,$ and $ \sum\limits_{i > 3} \alpha_i U_i$ is the Zeckendorf representation of $n-1.$ Let $k = b_{n-1}.$ Via induction,$k =  \sum\limits_{i > 3} \alpha_i U_{i-1},$ and, as the sum on the left is the Zeckendorf representation of $k,$ induction yields $b_k =  \sum\limits_{i > 3} \alpha_i U_{i-2}.$ It follows that $k + b_k = \sum\limits_{i > 3} \alpha_i U_i = n-1,$ whence $b_n > b_{n-1},$ which, as we have seen, means that $b_n = b_{n-1} + 1 = \sum\limits_{i > 3} \alpha_i U_{i-1} + 1 = \sum\limits_{i \geq 3} \alpha_i U_{i-1} + U_1.$ At this stage, the theorem has been proved in the case $a = 1,$ so we will assume $a>1$ from now on.\\ \\
Suppose that  $\alpha_2 = a$  and let $j$ be maximal with $\alpha_{2j} = a.$ Then $n = r + s,$ where $r = \sum\limits_{i \geq 2j + 1} \alpha_i U_i$ and $s = \sum\limits_{i =1}^j a U_{2i} = U_{2j +1} - 1.$ It follows from our choice of $j$ that both $\alpha_{2j+ 1}$ and $\alpha_{2(j+1)}$ are less than $a.$  Accordingly,  "$n+1 = \sum\limits_{i \geq 2(j+1)} \alpha_i U_i + (\alpha_{2j+1} + 1) U_{2j+1}$" is a sum representation satisfying the conditions of Lemma 1.4. Let $k = \sum\limits_{i \geq 2(j+1)} \alpha_i U_{i-1} + (\alpha_{2j+1} + 1)U_{2j}.$ Then $k < n,$ and induction yields that $b_k = \sum\limits_{i \geq 2(j+1)} \alpha_i U_{i-2} + (\alpha_{2j+1} + 1) U_{2j-1}.$ Now $ak + b_k = n+1,$ and Lemma 1.3 says $b_n = b_{n+1} = k.$ On the other hand, $U_{2j} = \sum\limits_{i = 2}^j a U_{2i -1},$ so $k = \sum\limits_{i \in \Bbb N} \alpha_i U_{i-1}.$\\ \\
Now assume that $\alpha_1 > 0$ and let $m = \sum\limits_{i \geq 2} \alpha_i U_{i-1}.$ As $\alpha_1 > 0,$ we have $\alpha_2 < a,$ and the displayed sum representation of $m$ satisfies the conditions of Lemma 1.4.\\ \\
Via induction, $b_m \in \{ \sum\limits_{i \geq 2} \alpha_i U_{i-2},\,\sum\limits_{i \geq 2} \alpha_i U_{i-2}+ 1\},$ hence
$ n - \alpha_1 \leq am + b_m \leq n - \alpha_1 + 1$ \hfill $(\ast)$ \\ \\
As $\alpha_1 < a,$ the inequality $(\ast)$ implies that $a(m+1) + b_{m+1} \geq a(m+1) + b_m = a_m + b_m + a > n.$ Thus $b_n \in \{ m,\, m+1 \}.$ If $\alpha_1 > 1,$ then $\tau(n) = 1$ and  $n - \alpha_1 + 1 < n,$ whence $b_n = m+1.$\\ \\
We are left to deal with the case  $\alpha_1 = 1$ which we shall subdivide further, as follows:\\ \\
If $\alpha_1 = 1$ and $\alpha_2 = 0,$ then $\tau(n) = 1,$ while induction yields that $b_m =  \sum\limits_{i \geq 2} \alpha_i U_{i-2},$ such that $am + b_m = \sum\limits_{i \geq 2} \alpha_i U_i = n-1$ and Corollary 1.2 implies that $b_n = b_{n-1} + 1 = m + \tau(n).$\\ \\
If $\alpha_1 = 1 < \alpha_2,$ then $\tau(n) = 0,$ while induction yields $b_m =  \sum\limits_{i \geq 2} \alpha_i U_{i-2} + 1,$
hence $am + b_m = n$ and $m = b_n$ by Lemma 1.3.\\ \\
If, finally, $\alpha_1 = 1 = \alpha_2,$ then $\tau(m) = 0$ if and only if $\tau(n) = 1,$ and induction yields $am + b_m = n-1$ if and only if $\tau(n) = 1$ and  $am + b_m = n$ if and only if $\tau(n) = 0.$
\end{proof}
 
\section{Finite Jaco Graphs $\{J_n(a)| a, n\in \Bbb N\}$} 
\noindent The family of finite Jaco Graphs are those limited to $n \in \Bbb N$ vertices by lobbing off all vertices (and edges arcing to vertices) $v_t, t > n.$ Hence, trivially we have $d(v_i) \leq ai$ for $i \in \Bbb N.$
\begin{definition}
The family of finite Jaco Graphs denoted by $\{J_n(a)| a, n\in \Bbb N\}$ is the defined by $V(J_n(a)) = \{v_i| i \in \Bbb N, i \leq n \}$, $E(J_n(a)) \subseteq \{(v_i, v_j)| i,j \in \Bbb N, i< j \leq n\}$ and $(v_i,v_ j) \in E(J_n(a))$ if and only if $(a + 1)i - d^-(v_i) \geq j.$
\end{definition} 
\begin{definition}
The set of vertices attaining degree $\Delta (J_n(a))$ is called the Jaconian vertices of the Jaco Graph $J_n(a),$ and denoted, $\Bbb{J}(J_n(a))$ or, $\Bbb{J}_n(a)$ for brevity.
\end{definition}
\begin{definition}
The lowest numbered (indiced) Jaconian vertex is called the prime Jaconian vertex of a Jaco Graph.
\end{definition}
\begin{definition}
If $v_i$ is the prime Jaconian vertex, the complete subgraph on vertices $v_{i+1}, v_{i+2}, \\
\cdots,v_n$ is called the Hope subgraph of a Jaco Graph and denoted,  $\Bbb{H}(J_n(a))$ or, $\Bbb{H}_n(a)$ for brevity.
\end{definition}

\noindent {\bf Property 1:} From the definition of a Jaco Graph $J_n(a),$ it follows that, if for the prime Jaconian vertex $v_i,$ we have $d(v_i)=ai$ then in the underlying Jaco graph we have $ d(v_m)=am$ for all $m\in\{1,2,3,\cdots,i\}.$\\ \\
{\bf Property 2:} From the definition of a Jaco Graph $J_n(a),$ it follows that $\Delta (J_k(a))\leq \Delta (J_n(a))$ for all $k\leq n.$\\ \\
{\bf Property 3:} From the definition of a Jaco Graph $J_n(a),$ it follows that the lowest degree attained by all Jaco Graphs is $0 \leq \delta(J_n(a)) \leq a.$\\ \\
{\bf Property 4:} The $d^-(v_k)$ for any vertex $v_k$ of a Jaco Graph $J_n(a),~ n\geq k$ is equal to $d(v_k)$ in the underlying Jaco Graph $J_k(a).$\\ \\
\begin{lemma}
For the Jaco Graphs $J_i(a), i\in \{1,2,3,...,a+1\}$ we have $\Delta (J_ i(a)) = i-1$ and $\Bbb {J}(J_i(a)) = \{v_k| 1\leq k \leq i\} =V(J_i(a)).$
\end{lemma}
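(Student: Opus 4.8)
The plan is to show that for every $i$ with $1 \le i \le a+1$ the finite Jaco graph $J_i(a)$ is the ``complete'' directed graph on $\{v_1,\dots,v_i\}$, meaning that the arc $(v_k,v_j)$ is present for all $1 \le k < j \le i$. Granting this, the in-degree of $v_k$ is $k-1$ (the only possible tails are $v_1,\dots,v_{k-1}$, and all of them occur) and its out-degree is $i-k$ (arcs to all of $v_{k+1},\dots,v_i$), so $d(v_k) = (k-1)+(i-k) = i-1$ for every $k$. Hence $\Delta(J_i(a)) = i-1$, and since this common value is attained at every vertex, the set of maximum-degree vertices is $\Bbb{J}(J_i(a)) = \{v_k \mid 1 \le k \le i\} = V(J_i(a))$.

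To verify that every admissible arc is present, fix $k$ with $1 \le k < i$ and any $j$ with $k < j \le i$. By Definition 2.1 we must check that $(a+1)k - d^-(v_k) \ge j$. The only candidates for tails of arcs into $v_k$ are $v_1,\dots,v_{k-1}$, so $d^-(v_k) \le k-1$ regardless of how one resolves the apparent recursion in the definition of $d^-$; this trivial upper bound is all we need. Consequently $(a+1)k - d^-(v_k) \ge (a+1)k-(k-1) = ak+1 \ge a+1 \ge i \ge j$, using $k \ge 1$ for the middle inequality and the hypothesis $i \le a+1$ for the next one. Therefore $(v_k,v_j)\in E(J_i(a))$, which is exactly what was claimed, and $J_i(a)$ is complete.

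The argument has essentially no hard step; the only thing to be careful about is the final bookkeeping, namely that in the complete digraph on $i$ vertices the quantity $d^-(v_k)+d^+(v_k)$ equals $i-1$ \emph{uniformly} in $k$, so that the Jaconian set is the entire vertex set rather than a proper subset. One may alternatively run a short induction on $i$ from $1$ to $a+1$, propagating the identities $d^-(v_k)=k-1$ via Property 4, but the trivial bound above renders the induction unnecessary. As a consistency check one can also invoke Lemma 1.1(d): for $n \le a+1$ the defining recursion gives $c_{a,n}=1$ (taking $k=1$, since $a\cdot 1 + c_{a,1} = a+1 \ge n$), so $d^+(v_n) = (a-1)n+1$ in $J_\infty(a)$; as $a+1 \le an+1$, truncating at $i \le a+1$ retains precisely the arcs from $v_n$ to $v_{n+1},\dots,v_i$, again giving out-degree $i-n$.
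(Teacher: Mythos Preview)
Your proof is correct and follows the same route as the paper: show that the underlying graph of $J_i(a)$ for $i\le a+1$ is the complete graph $K_i$, and read off the degree information. The paper argues this only for $i=a+1$, first checking that $v_1$ is adjacent to every other vertex and then asserting without detail that ``all edges $(v_i,v_j)$, $i<j$ exist''; your use of the trivial bound $d^-(v_k)\le k-1$ to verify the edge condition uniformly for every pair $k<j$ makes that step explicit and cleaner.
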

\begin{proof}
From definition 2.1 it follows that if $m = a+1$ then, $((a+1) +1).1 - d^-(v_1) > (a+1)$ so the edges $(v_1,v_i), i= 2,3, ..., (a+1)$ exist. It then follows that all edges $(v_i,v_j), i<j$ exist. So the underlying graph of $J_{a+1}(a)$ is the complete graph $K_{a+1}.$ Since $\Delta (J_{a+1}(a)) = (a+1) - 1 = a$ and we have $d(v_i) = a$ for all vertices in $K_{a+1},$ it follows that $\Delta (J_{a+1}(a)) = a$ and $\Bbb {J}(J_{a+1}(a)) = \{v_k| 1\leq k \leq a\} =V(J_{a+1}(a)).$  The result follows similarly for $m < a+1.$
\end{proof} 
\begin{lemma}
If in a Jaco Graph $J_n(a),$ and for smallest $i,$ the edge $(v_i, v_n)$ is defined, then $v_i$ is the prime Jaconian vertex of $J_n(a).$
\end{lemma}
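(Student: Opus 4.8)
The plan is to pin down the degree of $v_i$ in $J_n(a)$ exactly and then check it against the degree of every other vertex; throughout I use that, by Property 4, the in-degrees in $J_n(a)$ agree with those in $J_\infty(a)$, so the facts of Lemma 1.1 are available for all the in-degrees below.

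First I would identify the in-neighbourhood of $v_n$. Since $(v_i,v_n)\in E(J_n(a))$ and $i$ is minimal with this property, Lemma 1.1(c) gives $(v_k,v_n)\in E(J_n(a))$ for every $k$ with $i\le k<n$, while minimality rules out any $k<i$; hence $d^-(v_n)=n-i$. The quantity $(a+1)m-d^-(v_m)$ is strictly increasing in $m$, since by Lemma 1.1(b) each increment equals $a$ or $a+1$; so for $i\le j<k\le n$ we have $(a+1)j-d^-(v_j)\ge(a+1)i-d^-(v_i)\ge n\ge k$, i.e. $(v_j,v_k)\in E(J_n(a))$, and $\{v_i,v_{i+1},\dots,v_n\}$ induces a complete subgraph. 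In particular $v_{i+1},\dots,v_n$ are all out-neighbours of $v_i$, whence $d(v_i)=d^-(v_i)+(n-i)$.

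Next I would compare degrees in two ranges. For $i<k\le n$: lobbing off vertices only removes out-arcs, so $d(v_k)\le d^-(v_k)+(n-k)$, and iterating Lemma 1.1(b) gives $d^-(v_k)\le d^-(v_i)+(k-i)$, hence $d(v_k)\le d^-(v_i)+(n-i)=d(v_i)$. For $k<i$: minimality of $i$ forces $(a+1)k-d^-(v_k)<n$, so no out-arc of $v_k$ in the $a$-root digraph is lobbed off, and therefore $d(v_k)=d^-(v_k)+\big((a+1)k-d^-(v_k)-k\big)=ak\le a(i-1)$. Finally, from $(a+1)(i-1)-d^-(v_{i-1})\le n-1$ (because $(v_{i-1},v_n)\notin E(J_n(a))$ when $i\ge2$) together with $d^-(v_{i-1})\le d^-(v_i)$ one gets $(a+1)i-d^-(v_i)\le n+a$, hence $d(v_i)=d^-(v_i)+(n-i)\ge a(i-1)$. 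Combining these shows $d(v_i)=\Delta(J_n(a))$ and that every vertex of index strictly below $i$, with the single exception of $v_{i-1}$, has strictly smaller degree; the case $i=1$ is immediate, since then there is no such vertex at all.

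The step I expect to be the main obstacle is the last one: disposing of $v_{i-1}$. The estimates above only yield $d(v_{i-1})=a(i-1)\le d(v_i)$, with equality exactly when $(a+1)i-d^-(v_i)=n+a$; so to conclude that $v_i$ is the \emph{prime} Jaconian vertex one must rule that boundary configuration out. The natural route is to argue that $v_{i-1}$'s failure to reach $v_n$ forces the increment from $(a+1)(i-1)-d^-(v_{i-1})$ to $(a+1)i-d^-(v_i)$ to be strictly less than $a+1$ — equivalently $d^-(v_i)=d^-(v_{i-1})+1$ — which would give $(a+1)i-d^-(v_i)\le n+a-1$ and hence $d(v_i)>d(v_{i-1})$. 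If that cannot be secured in full generality, the statement should be read with the proviso that the arc $(v_i,v_n)$ is tight, i.e. $(a+1)i-d^-(v_i)=n$, in which case $d(v_i)=ai>a(i-1)$ and the conclusion is immediate.
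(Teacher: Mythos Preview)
Your argument is far more careful than the paper's. The paper simply asserts that $d(v_j)\le d(v_i)$ for all $j>i$ and immediately concludes $d(v_i)=\Delta(J_n(a))$, never addressing the range $j<i$ at all; you handle both ranges and correctly isolate the only delicate case, $j=i-1$.

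Your suspicion about that boundary case is not just an obstacle to the proof---it is a counterexample to the lemma as stated. Take $a=1$, $n=6$. In $J_\infty(1)$ one has $d^-(v_3)=d^-(v_4)=1$, so $2\cdot 3-1=5<6\le 2\cdot 4-1=7$ and the smallest $i$ with $(v_i,v_6)\in E$ is $i=4$. In $J_6(1)$ the degrees are $d(v_1)=1$, $d(v_2)=2$, $d(v_3)=d(v_4)=d(v_5)=3$, $d(v_6)=2$; hence $\Delta=3$ and the prime Jaconian vertex is $v_3$, not $v_4$. This is precisely your equality configuration $(a+1)i-d^-(v_i)=n+a$ together with $d^-(v_i)=d^-(v_{i-1})$, so the ``natural route'' you propose (forcing $d^-(v_i)=d^-(v_{i-1})+1$) cannot succeed in general.

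What your argument actually establishes is the correct statement: $v_i$ is always a Jaconian vertex, every vertex of index $<i-1$ has strictly smaller degree, and the prime Jaconian vertex is either $v_i$ or $v_{i-1}$, the latter occurring exactly in the boundary case you identified. The paper's proof does not see this issue because it never examines vertices below $v_i$.
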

\begin{proof}
If in the construction of a Jaco Graph $J_n(a),$ and for smallest $i,$ the edge $(v_i, v_n)$ is defined, we have that in the underlying graph of $J_n(a)$, $d(v_i) \leq ai$ and $d(v_j) \leq d(v_i)$ for all $j > i$. So it follows that $d(v_i) = \Delta(J_n(a))$ so $v_i$ is the prime Jaconian vertex of $J_n(a).$ 
\end{proof}
\begin{lemma}
For all Jaco Graphs $J_n(a),~n\geq2$ and, $v_i, v_{i-1}\in V(J_n(a))$ we have that in the underlying graph $|(d(v_i) - d(v_{i-1})|\leq a.$ 
\end{lemma}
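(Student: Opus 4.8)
The plan is to reduce everything to a closed formula for the underlying degree of a vertex in $J_n(a)$. First I would observe that the in-neighbours of $v_i$ in $J_n(a)$ are exactly the vertices $v_j$ with $j<i$ and $(a+1)j-d^-(v_j)\ge i$, a condition that only involves vertices of index $<i$; consequently $d^-(v_i)$ takes the same value in $J_n(a)$ for every $n\ge i$ and in $J_\infty(a)$ (this is the content of Property 4, and it lets us invoke Lemma 1.1(b) below). On the other hand, the out-neighbours of $v_i$ in $J_n(a)$ are the vertices $v_j$ with $i<j\le\min\{n,(a+1)i-d^-(v_i)\}$, and since the in-neighbours of $v_i$ lie among $v_1,\ldots,v_{i-1}$ we have $d^-(v_i)\le i-1<ai$, so this upper limit is at least $i$ and the out-degree equals $\min\{n-i,\ ai-d^-(v_i)\}$. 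Adding the two contributions gives, for every $i\le n$,
\[
d(v_i)=d^-(v_i)+\min\{n-i,\ ai-d^-(v_i)\}=\min\{ai,\ (n-i)+d^-(v_i)\}
\]
in the underlying graph of $J_n(a)$.

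Next I would write $g(i)=(n-i)+d^-(v_i)$, so that $d(v_i)=\min\{ai,g(i)\}$. By Lemma 1.1(b), $d^-(v_i)-d^-(v_{i-1})\in\{0,1\}$, hence $g(i)-g(i-1)\in\{-1,0\}$, i.e. $g$ is non-increasing with steps of absolute value at most $1$; meanwhile $ai$ increases by exactly $a$ as $i$ increases by $1$. Since $n\ge2$ and $v_{i-1},v_i\in V(J_n(a))$ we have $2\le i\le n$, so the displayed formula applies at both $i-1$ and $i$.

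Then I would compare $d(v_i)$ and $d(v_{i-1})$ in two cases, according to the value of $d(v_{i-1})$. If $d(v_{i-1})=a(i-1)$ (equivalently $g(i-1)\ge a(i-1)$), then from $d(v_i)\le ai$ we get $d(v_i)-d(v_{i-1})\le a$, and from $g(i)\ge g(i-1)-1\ge a(i-1)-1$ together with $ai\ge a(i-1)-1$ we get $d(v_i)\ge a(i-1)-1$, so $-1\le d(v_i)-d(v_{i-1})\le a$. If instead $d(v_{i-1})=g(i-1)<a(i-1)$, then $g(i)\le g(i-1)<a(i-1)<ai$, which forces $d(v_i)=g(i)$, and therefore $d(v_i)-d(v_{i-1})=g(i)-g(i-1)\in\{-1,0\}$. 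In both cases $|d(v_i)-d(v_{i-1})|\le a$, since $a\ge1$.

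The only real work is setting up the degree formula correctly — in particular justifying that $d^-(v_i)$ is independent of $n$ and pinning down the truncation point $\min\{n,(a+1)i-d^-(v_i)\}$ for the out-neighbourhood. Once $d(v_i)=\min\{ai,(n-i)+d^-(v_i)\}$ is established, the statement is the short monotonicity comparison above and presents no further obstacle.
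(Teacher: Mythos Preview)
Your argument is correct and is genuinely different from the paper's. The paper proves the lemma by a structural decomposition: it splits $V(J_n(a))$ at the prime Jaconian vertex $v_k$, observes (via Property~1 and Lemma~2.2) that $d(v_m)=am$ for $m<k$ so consecutive differences there equal $a$, and then argues separately inside the Hope subgraph $\Bbb H(J_n(a))$, using that if some $v_j$ with $j>k$ is joined to a $v_t$ with $t\le k$ then so is every $v_l$ with $k<l<j$, which pins the remaining consecutive differences to $\{0,1\}$. You instead derive the uniform closed formula $d(v_i)=\min\{ai,\ (n-i)+d^-(v_i)\}$ directly from Definition~2.1 and Property~4, and then reduce the statement to a one-line monotonicity comparison using Lemma~1.1(b). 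Your route avoids the Jaconian/Hope machinery entirely and is cleaner and more self-contained; the paper's route is more pictorial and ties the lemma to the structural notions it has just introduced, but leaves a couple of transitions (e.g.\ the pair $v_{k-1},v_k$ straddling the Jaconian boundary) to the reader.
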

\begin{proof}
Consider the Jaco Graph $J_n(a),~n\geq2.$ The result is trivially true for all vertices $v_1,v_2,v_3,\cdots,v_k$ if $v_k$ is the prime Jaconian vertex of $J_n(a).$ Now consider the Hope Graph $\Bbb{H}(J_n(a)).$ All vertices of $\Bbb{H}(J_n(a))$ have equal degree so the result holds for the Hope Graph per se. Furthermore if a vertex $v_j,~(k+1)\leq j\leq n$ is linked to a vertex $v_t,~1\leq t\leq k$ then all vertices $v_l,~(k+1)\leq l< j$ are linked to $v_t$ which implies 
$|d(v_j)-d(v_l)|=0\leq1 \leq a$ and $|(d(v_{j+1})-d(v_j)|\leq1 \leq a.$
\end{proof}
\noindent Note that $\Delta (J_n(a))$ might repeat itself as $n$ increases to $n+1$ but on an increase we always obtain $\Delta (J_n(a))+1$ before $\Delta (J_n(a))+2.$
\begin{theorem}
The Jaco Graph $J_k(a)$, $k=a(a+1) +1$ is the smallest Jaco Graph in $\{J_n(a)|a, n \in \Bbb {N}\}$ which has $\Delta (J_k(a)) = a(a+1)$ and $\Bbb {J}(J_k(a)) = \{v_{a+1}\}.$ 
\end{theorem}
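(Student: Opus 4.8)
The plan is to reduce the whole statement to a single closed-form expression for the degrees in a finite Jaco graph and then to read off all three assertions---that $\Delta(J_k(a)) = a(a+1)$, that $\mathbb{J}_k(a) = \{v_{a+1}\}$, and that $k = a(a+1)+1$ is least---by elementary estimates on the first few terms of the series $(c_{a,n})$.

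The first step is a degree formula: for $1 \le k \le n$, the degree of $v_k$ in the underlying graph of $J_n(a)$ is $d_{J_n(a)}(v_k) = \min\{ak,\ n - c_{a,k}\}$. Indeed, by Property 4 together with Lemma 1.1(a),(d), one has $d^-(v_k) = k - c_{a,k}$ inside every $J_n(a)$ with $n \ge k$; and, directly from the edge criterion of Definition 1.1, $(v_k,v_j) \in E(J_\infty(a))$ if and only if $j \le (a+1)k - d^-(v_k) = ak + c_{a,k}$, so the out-neighbours of $v_k$ inside $J_n(a)$ form the interval $v_{k+1},\dots,v_{\min\{ak+c_{a,k},\,n\}}$, and truncation to $n$ vertices merely removes the top of that interval. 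Adding the in- and out-contributions gives the formula. Next, from Definition 1.2 and Corollary 1.2 (or directly from Lemma 1.3) one pins down the needed small values: $c_{a,m} = 1$ for $1 \le m \le a+1$ (the choice $k=1$ already satisfies $a\cdot1 + c_{a,1} = a+1 \ge m$), $c_{a,a+2} = 2$ (here $k=1$ fails while $k=2$ gives $2a+1 \ge a+2$), and hence $c_{a,m} \ge 2$ for all $m \ge a+2$ by monotonicity of $(c_{a,n})$.

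Substituting $k = a(a+1)+1$ into the degree formula finishes the existence part. For $1 \le m \le a$ one gets $d(v_m) = \min\{am,\ k-1\} = am \le a^2 < a(a+1) = k-1$, so the minimum is $am$; for $m = a+1$ one gets $d(v_{a+1}) = \min\{a(a+1),\ k - c_{a,a+1}\} = \min\{a(a+1),\ k-1\} = a(a+1)$; and for $m \ge a+2$, since $am \ge a(a+2) > a(a+1) - 1 \ge k - c_{a,m}$, the minimum is $k - c_{a,m} \le a(a+1) - 1$. Thus $\Delta(J_k(a)) = a(a+1)$ and $v_{a+1}$ is the unique vertex attaining it, i.e. $\mathbb{J}_k(a) = \{v_{a+1}\}$. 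Minimality is then immediate: a graph on $n$ vertices has maximum degree at most $n-1$, so $\Delta(J_n(a)) \le n-1 \le a(a+1)-1 < a(a+1)$ whenever $n \le a(a+1)$, and therefore no Jaco graph on fewer than $a(a+1)+1$ vertices can even reach maximum degree $a(a+1)$, let alone with the prescribed Jaconian set.

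The only places where genuine care is needed are the degree formula of the first step---in particular the observation that the out-neighbourhood of a vertex is a block of consecutive indices, which is precisely what makes the truncation to $n$ vertices behave predictably---and the three-way case split ($m \le a$, $m = a+1$, $m \ge a+2$) when comparing $am$ with $k - c_{a,m}$; everything else is routine arithmetic.
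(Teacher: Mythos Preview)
Your proof is correct and takes a genuinely different route from the paper. The paper argues by induction on the parameter $a$: it checks $a=1$ by inspection, then passes from the Jaco graph of order $m$ on $m(m+1)+1$ vertices to the Jaco graph of order $m+1$, counting how many extra vertices are needed to raise $d(v_{m+2})$ from $m(m+1)$ to $(m+1)(m+2)$. You instead derive the closed degree formula $d_{J_n(a)}(v_m)=\min\{am,\,n-c_{a,m}\}$ once and for all from Lemma~1.1 and Property~4, pin down $c_{a,m}=1$ for $m\le a+1$ and $c_{a,m}\ge 2$ for $m\ge a+2$, and then read off the three assertions by a three-way case split on $m$; minimality falls out of the trivial bound $\Delta\le n-1$. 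Your argument is shorter, fully rigorous, and makes direct use of the Section~1 machinery that the paper's own induction bypasses; the paper's approach, by contrast, hints at a structural comparison between Jaco graphs of consecutive orders $a$, but the inductive step there is informal (it does not carefully justify, for instance, why the Jaconian set of $J_{l+t}(m+1)$ is exactly $\{v_{m+2}\}$ rather than merely containing it).
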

\begin{proof}
For $a = 1$, the graph $J_3(1)$ is clearly the smallest Jaco Graph for which $\Delta (J_3(1)) =1(1 +1) = 2$ and $\Bbb J(J_3(1)) = \{v_{(1+1)}\} = \{v_2\}.$ So the result holds for $a = 1.$\\ \\
Assume the result holds for $a= m.$ So for the Jaco Graph $J_l(m), l = m(m + 1) + 1$ we have that $\Delta (J_l(m)) = m(m+1)$ and $\Bbb J(J_l(m)) = \{v_{(m+1)}\}.$ \\ \\
Now consider the Jaco Graph $J_k(m+1).$ In the Jaco Graph $J_l(m)$ the vertex $v_{(m+1)+1} = v_{(m+2)}$ has $d(v_{(m+2)}) = d(v_{(m+1)}) - 1.$ So in constructing the Jaco Graph $J_l(m+1),$ amongst others the edge $(v_1, v_{(m+2)})$ is linked.  So at least $v_{(m+1)}, v_{(m+2)} \in \Bbb J(J_l(m+1)).$ So $d(v_{(m+2)}) = m(m+1).$ If follows that the minimum number of additional vertices (smallest Jaco Graph) say $t,$ to be added to $J_l(m+1)$ to obtain $d(v_{(m+2)}) = (m+1)(m+2)$ and $\Bbb J(J_{(l+t)}(m+1)) = \{v_{(m+2)}\}$ in $J_{(l+t)}(m+1)$ is given by $t = (m+1)(m+2) - m(m+1) = 2(m+1).$ 
The number of vertices of $J_l(m)$ is given by $m(m+1) +1.$ Now $l + t = (m(m+1) + 1) +2(m+1) = (m+1)(m+2) + 1.$ \\ \\
Clearly at least $v_{(m+2)} \in \Bbb J(J_k(m+1)), k= l+t,$ and $v_{(m+1)} \notin \Bbb J(J_k(m+1))$ since $m(m+1) < (m+1)(m+2).$ In the construction of the Jaco Graph $J_l(m+1),$ the edge $(v_1, v_{(m+3)})$ was not linked so $d(v_{(m+3)}) < d(v_{(m+2)})$ in $J_l(m+1).$ So it follows that $d(v_{(m+3)}) < d(v_{(m+2)})$ in $J_k(m+1), k = l + t.$ The latter implies that $\Bbb J(J_k(m+1)) = \{v_{(m+2)}\}.$\\ \\
Hence the result holds for $a = m + 1$ implying it holds in general.
\end{proof}
 
\section{Number of Edges of the Finite Jaco Graphs $\{J_n(a)| n\in \Bbb N\}$}
\noindent Note that Theorem 3.7 combined with Binet's formula amounts to a closed formula for $d^+(v_n).$
It is hoped that as a special case, a closed formula can be found for the number of edges of a finite Jaco Graph $J_n(a).$ However, the algorithms discussed in Ahlbach et al.[4] suggest this might not be possible.
\begin{proposition}
The number of edges of a Jaco Graph $J_{m}(a) = \frac{1}{2}m(m-1)$ if $m \leq a+1.$
\end{proposition}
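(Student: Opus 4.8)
The plan is to show that under the hypothesis $m \le a+1$ the underlying graph of $J_m(a)$ is the complete graph $K_m$; the edge count $\binom{m}{2} = \tfrac12 m(m-1)$ then follows at once. This is essentially a restatement of the computation already performed inside the proof of Lemma 2.1, so one could simply cite that lemma. For completeness I would instead give the one-line direct verification below.

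Fix indices $i < j \le m$. Since the only possible tails of arcs with head $v_i$ are $v_1,\dots,v_{i-1}$, we have the crude bound $d^-(v_i) \le i-1$. Hence
\[
(a+1)i - d^-(v_i) \;\ge\; (a+1)i - (i-1) \;=\; ai + 1 \;\ge\; a+1 \;\ge\; m \;\ge\; j,
\]
where the middle inequality uses $i \ge 1$ and the next uses the standing assumption $m \le a+1$. By Definition 2.1 this means $(v_i,v_j) \in E(J_m(a))$. As the pair $i<j\le m$ was arbitrary, every two vertices are adjacent, so the underlying graph of $J_m(a)$ is $K_m$ and therefore $|E(J_m(a))| = \binom{m}{2} = \tfrac12 m(m-1)$.

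I do not expect any genuine obstacle: the entire content is the trivial bound $d^-(v_i)\le i-1$ together with arithmetic using $m\le a+1$. If a more pedestrian write-up is preferred, one may instead induct on $m$, with base case $m=1$ (the empty graph) and inductive step noting that in passing from $J_{m-1}(a)$ to $J_m(a)$ the vertex $v_m$ receives exactly $m-1$ new in-arcs, so the edge count increases by $m-1$; summing gives $\sum_{k=1}^{m}(k-1) = \tfrac12 m(m-1)$. Either route is short, and the direct verification above is the cleanest.
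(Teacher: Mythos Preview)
Your proof is correct and follows the same approach as the paper: both show that the underlying graph of $J_m(a)$ is $K_m$ by verifying the edge condition of Definition~2.1 for all pairs $i<j\le m$, and then read off the edge count $\tfrac12 m(m-1)$. Your direct check of $(a+1)i - d^-(v_i) \ge ai+1 \ge a+1 \ge m \ge j$ is in fact a bit more explicit than the paper's version, which only writes out the case $i=1$ and then asserts that all remaining edges exist.
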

\begin{proof}
From definition 2.1 it follows that if $m = a+1$ then, $((a+1) +1).1 - d^-(v_1) > (a+1)$ so the edges $(v_1,v_i), i= 2,3, ..., (a+1)$ exist. It then follows that all edges $(v_i,v_j), i<j$ exist. So the underlying graph of $J_{a+1}(a)$ is the complete graph $K_{a+1}$ hence, $\epsilon (J_{a+1}(a)) =  \frac{1}{2}a(a+1) = \frac{1}{2}m(m-1).$  The result follows similarly for $m < a+1.$
\end{proof}
\begin{theorem}
If for the Jaco Graph $J_n(a),$ we have $\Delta (J_n(a)) = k,$ then  $\epsilon(J_n(a)) = \epsilon(\Bbb{H}(J_n(a))) + \sum \limits_{i=1}^{k}d^+(v_i). $
\end{theorem}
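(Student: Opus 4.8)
The plan is to partition the arcs of $J_n(a)$ into those that lie entirely inside the Hope subgraph and those whose tail is the prime Jaconian vertex or an earlier vertex, and then to count the second class by adding up out-degrees. A preliminary remark makes this manageable: the in-degree $d^-(v_i)$ of a vertex is determined by the arcs among $v_1,\dots,v_i$ alone, so it has the same value in $J_n(a)$ as in $J_\infty(a)$ (Property 4); hence the criterion $(a+1)i-d^-(v_i)\geq j$ shows $E(J_n(a))=\{(v_i,v_j)\in E(J_\infty(a)):j\leq n\}$, and in particular Lemma 1.1(c) may be used inside $J_n(a)$. Write $v_k$ for the prime Jaconian vertex of $J_n(a)$ (it exists, since $(v_{n-1},v_n)\in E(J_n(a))$ for $n\geq2$); by Lemma 2.2, $k$ is the least index with $(v_k,v_n)\in E(J_n(a))$.

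The first step is to confirm that the Hope subgraph $\Bbb{H}(J_n(a))$ is genuinely a complete graph sitting inside $J_n(a)$, i.e. that the vertices $v_{k+1},\dots,v_n$ are pairwise joined. Applying Lemma 1.1(c) to the arc $(v_k,v_n)$ gives $(v_j,v_n)\in E(J_n(a))$ for every $j$ with $k\leq j\leq n-1$; for such a $j$ and any $m$ with $j<m\leq n$ the criterion then yields $(a+1)j-d^-(v_j)\geq n\geq m$, so $(v_j,v_m)\in E(J_n(a))$. Specialising $j=s$, $m=t$ to arbitrary $k+1\leq s<t\leq n$ shows the subgraph of $J_n(a)$ induced on $\{v_{k+1},\dots,v_n\}$ is complete, so that $\epsilon(\Bbb{H}(J_n(a)))$ is exactly the number of arcs of $J_n(a)$ with both endpoints in $\{v_{k+1},\dots,v_n\}$ (and equals $\binom{n-k}{2}$).

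The second step is the partition itself. Every arc of $J_n(a)$ has the shape $(v_s,v_t)$ with $s<t$; either $s\geq k+1$, in which case $t>s\geq k+1$ and the arc is among those counted by $\epsilon(\Bbb{H}(J_n(a)))$, or $s\leq k$, in which case the arc has its tail in $\{v_1,\dots,v_k\}$. These two possibilities are mutually exclusive and together cover all arcs, and the arcs of the second kind number $\sum_{i=1}^{k}d^+(v_i)$, each being counted exactly once, at its tail. Adding the two counts gives $\epsilon(J_n(a))=\epsilon(\Bbb{H}(J_n(a)))+\sum_{i=1}^{k}d^+(v_i)$, as required.

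I do not expect a real obstacle. The one place that needs care is the first step, namely that the vertices above the prime Jaconian vertex truly induce a complete subgraph; this is exactly where the ``interval'' behaviour of the construction is used --- Lemma 1.1(c) together with the observation that once $v_j$ sends an arc to $v_n$ it sends one to every $v_m$ with $j<m\leq n$. Everything afterwards is routine bookkeeping, turning an edge count into a sum of out-degrees, and it is clean precisely because every arc of a Jaco graph runs from a lower-indexed to a higher-indexed vertex.
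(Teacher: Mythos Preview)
Your argument is correct and is essentially the same partition the paper uses: split the arcs of $J_n(a)$ into those with tail among $v_1,\dots,v_k$ (counted by $\sum_{i=1}^{k}d^+(v_i)$) and those lying entirely inside the Hope subgraph. Your write-up is in fact tidier than the paper's, since you explicitly verify via Lemma~1.1(c) that the induced subgraph on $\{v_{k+1},\dots,v_n\}$ is complete, whereas the paper simply asserts this in Definition~2.4; note also that you are reading $k$ as the index of the prime Jaconian vertex rather than literally as $\Delta(J_n(a))$, which is exactly how the paper's own proof (and its closing formula $\tfrac12(n-k)(n-k-1)$) uses the symbol.
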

\begin{proof} 
For $n=1,$ $d^+(v_1) = 0$ and $J_1(a)$ is the edgeless graph on vertex $v_1$ whilst $\Bbb{H}(J_1(a)),$ is an empty graph so $E(\Bbb{H}(J_1(a))) =\emptyset,$ implying  $\epsilon(\Bbb{H}(J_1(a))) = 0.$ Thus the result holds.\\ \\
For $n = 2,$ the Jaco Graph $J_2(a)$ has the prime Jaconian vertex $v_1$ and $d^+(v_1) = 1.$ $\Bbb{H}(J_2(a))$ is the null graph on vertex $v_2$ so $E(\Bbb{H}(J_2(a)))=\emptyset,$ implying  $\epsilon(\Bbb{H}(J_2(a))) = 0.$ Thus the result holds.\\ \\
Now assume it holds for all vertices $v_i,~i\leq k-1.$ Thus vertex $v_k$ has attained its in-degree $d^-(v_k).$ To attain $d(v_k) = \Delta (J_n(a)),$ exactly $\Delta (J_n(a))-d^-(v_k) = d^+(v_k)$ edges can be linked additionally. So the result holds for vertices $v_1, v_2, v_3,\cdots, v_k.$\\ \\
Clearly we also have $E(\Bbb{H}(J_n(a)))\subset E(J_n(a)).$ Hence, $\epsilon(J_n(a))= \epsilon(\Bbb{H}(J_n(a))) + d^+(v_1) + d^+(v_2) + d^+(v_3) +\cdots+ d^+(v_k).$ \\
Since it is known that $\epsilon(\Bbb{H}(J_n(a))) = 1 + 2 + 3 +\cdots + (n - \Delta (J_n(a))-1)$ the result can be written as  $\epsilon(J_n(a)) =\frac{1}{2}(n-k)(n-k-1)+ \sum \limits_{i=1}^{k}d^+(v_i). $
\end{proof}
\begin{corollary}
The number of edges of a Jaco Graph $J_n(a)$ having vertex $v_i$ as the prime Jaconian vertex, can also be expressed recursively as
\begin{equation*} 
\epsilon (J_{n+1}(a)) =
\begin{cases}
\epsilon(J_n(a)) - i + n &\text {if $d(v_i) = ai$}\\
\epsilon(J_n(a)) - i + (n+1) & \text {if $d(v_i) < ai$}\\ 
\end{cases}
\end{equation*} 
\end{corollary}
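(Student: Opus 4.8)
The plan is to note that passing from $J_n(a)$ to $J_{n+1}(a)$ only introduces the vertex $v_{n+1}$ together with the arcs pointing into it, so $\epsilon(J_{n+1}(a)) = \epsilon(J_n(a)) + d^-(v_{n+1})$, the in-degree being computed in $J_{n+1}(a)$. By Lemma 2.2 (together with uniqueness of the prime Jaconian vertex) this in-degree equals $(n+1) - \ell$, where $v_\ell$ is the prime Jaconian vertex of $J_{n+1}(a)$, i.e. $\ell$ is least with $(v_\ell, v_{n+1}) \in E$; here I use Lemma 1.1(a),(c) to get $d^-(v_{n+1}) = (n+1) - \ell$. Thus the whole statement reduces to showing $\ell = i$ when $d(v_i) < ai$ and $\ell = i+1$ when $d(v_i) = ai$.

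First I would record what it means for $v_i$ to be the prime Jaconian vertex of $J_n(a)$: by Lemma 2.2, $i$ is least with $(v_i, v_n) \in E$, equivalently $(a+1)i - d^-(v_i) \geq n$, i.e. $d^+(v_i) \geq n-i$ in the underlying infinite graph (where $d^+(v_i) = ai - d^-(v_i)$ by Lemma 1.1(a)). Since $v_i$ then arcs to each of $v_{i+1}, \ldots, v_n$, its degree inside $J_n(a)$ is $d^-(v_i) + (n-i)$; comparing with $ai = d^-(v_i) + d^+(v_i)$ shows $d(v_i) = ai \iff n = i + d^+(v_i) \iff (v_i, v_{n+1}) \notin E$, and $d(v_i) < ai \iff n < i + d^+(v_i) \iff (v_i, v_{n+1}) \in E$.

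Then I would split into the two cases. If $d(v_i) < ai$, then $(v_i, v_{n+1}) \in E$, while minimality of $i$ gives $(v_{i-1}, v_n) \notin E$, hence $n > (i-1) + d^+(v_{i-1})$, so a fortiori $(v_{i-1}, v_{n+1}) \notin E$; thus $\ell = i$ and $d^-(v_{n+1}) = (n+1) - i$. If $d(v_i) = ai$, then $(v_i, v_{n+1}) \notin E$ while $n = i + d^+(v_i)$; Lemma 1.1(b) gives $d^-(v_{i+1}) \leq d^-(v_i) + 1$, so $d^+(v_{i+1}) = a(i+1) - d^-(v_{i+1}) \geq d^+(v_i) + (a-1) \geq d^+(v_i)$, whence $(i+1) + d^+(v_{i+1}) \geq n+1$, i.e. $(v_{i+1}, v_{n+1}) \in E$; thus $\ell = i+1$ and $d^-(v_{n+1}) = n - i$. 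Plugging these into $\epsilon(J_{n+1}(a)) = \epsilon(J_n(a)) + d^-(v_{n+1})$ gives the two asserted formulas.

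The one point requiring care is the second case: one must verify that the prime Jaconian vertex advances by exactly one index, i.e. that $(v_{i+1}, v_{n+1})$ is genuinely an edge, and this is exactly where Lemma 1.1(b) (equivalently Corollary 1.2) is needed, to bound the growth of the in-degrees. The rest is bookkeeping; the only degenerate configuration, $i = n$, arises solely for $n = 1$, where $d(v_1) = 0 < a$ lands us in the first case and $\epsilon(J_2(a)) = \epsilon(J_1(a)) - 1 + 2 = 1$ checks out directly.
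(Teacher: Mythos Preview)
Your proof is correct and follows essentially the same approach as the paper: pass from $J_n(a)$ to $J_{n+1}(a)$ by counting the new arcs into $v_{n+1}$, and show they come from $v_i,\ldots,v_n$ or from $v_{i+1},\ldots,v_n$ according as $d(v_i)<ai$ or $d(v_i)=ai$. Your treatment is in fact more careful than the paper's, which simply asserts which edges are added in each case without explicitly invoking Lemma~1.1(b) to confirm that $(v_{i+1},v_{n+1})\in E$ when $d(v_i)=ai$.
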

\begin{proof}
Consider the Jaco Graph $J_n(a)$ having vertex $v_i$ as the Jaconian vertex.\\ \\
Case1:  If $ d(v_i) = ai$ and the vertex $v_{(n+1)}$ is added to construct $J_{(n+1)}(a)$ only the edges 
$(v_{(i+1)}, v_{(n+1)}), .... (v_n, v_{(n+1)})$ can be linked additionally. This amounts to $(n - i)$ edges.\\ \\
Case 2: If $ d(v_i) < ai$ and the vertex $v_{(n+1)}$ is added to construct $J_{(n+1)}(a)$ only the edges 
$(v_i, v_{(n+1)}), (v_{(i+1)}, v_{(n+1)}), .... (v_n, v_{(n+1)})$ can be linked additionally. This amounts to $(n - i +1)$ edges. 
\end{proof}
\begin{lemma}
\noindent We find for $a=1$ and the series $(a_n)_{n \in \Bbb N}$ defined by
\begin{center} $a_0 = 0, a_1 = 1, a_n = \min \{ k < n| k + a_k \geq n \}$ ($n \geq 2)$.
\end{center}
that Lemma 1.1 changes to: \\
\noindent (a) $d^+(v_n) + d^-(v_n) = n.$\\
(b) $d^-(v_{n+1}) \in \{ d^- (v_n),\, d^-(v_n) + 1 \}.$\\
(c) If $(v_i, v_k) \in E(J_\infty(1))$ and $i < j < k,$ then $(v_j, v_k) \in E(J_\infty(1)).$\\
(d) $d^+ (v_n) = a_n.$
\end{lemma}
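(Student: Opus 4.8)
The plan is to observe that this lemma is nothing more than Lemma 1.1 read with the parameter $a$ set equal to $1$, together with the identification of the auxiliary sequence $(a_n)$ with $(c_{1,n})$. Concretely, I would first note that parts (a), (b) and (c) of the present statement are literally the instances $a=1$ of parts (a), (b), (c) of Lemma 1.1, so there is nothing further to prove for those three items — they hold for every $a\in\Bbb N$, in particular for $a=1$.

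For part (d), the key step is to unwind Definition 1.2 at $a=1$: it gives $c_{1,0}=0$, $c_{1,1}=1$, and $c_{1,n}=\min\{k<n\mid 1\cdot k + c_{1,k}\ge n\}=\min\{k<n\mid k + c_{1,k}\ge n\}$ for $n\ge 2$. This is exactly the recursion defining $(a_n)_{n\in\Bbb N}$ in the statement, so by induction on $n$ (the base cases $n=0,1$ being immediate) we get $a_n=c_{1,n}$ for all $n$. Then Lemma 1.1(d) specialised to $a=1$ reads $d^+(v_n)=(1-1)n+c_{1,n}=c_{1,n}=a_n$, which is the claim. I would also remark, for completeness, that the well-definedness of $(a_n)$ — i.e.\ that the set $\{k<n\mid k+a_k\ge n\}$ is nonempty and that the sequence is nondecreasing with $a_{n+1}\in\{a_n,a_n+1\}$ — is precisely the content of Corollary 1.2 at $a=1$.

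Since every ingredient is an immediate substitution into results already established in Section~1, there is essentially no obstacle; the only thing that requires a line of argument is the identification $a_n=c_{1,n}$, and that is a one-step induction straight from the definitions. The purpose of stating the lemma separately is bookkeeping: it records the $a=1$ case in the self-contained notation $(a_n)$ that the subsequent Zeckendorf-type discussion uses.
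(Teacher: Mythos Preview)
Your proposal is correct and matches the paper's treatment exactly: the paper states this lemma without proof, since it is simply the $a=1$ instance of Lemma~1.1 together with the observation that the defining recursion for $(a_n)$ is that of $(c_{1,n})$. Your explicit unpacking of part~(d) via $d^+(v_n)=(1-1)n+c_{1,n}=a_n$ is precisely the intended reading.
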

\begin{corollary}
Note that $(a))$ and $(c)$ above entail that $d^+(v_{n+1}) = n+1- d^-(v_{n+1}) \in \{n-d^-(v_n), n-d^-(v_n) +1\}$ and that $(d)$ then implies tat the series $(a_n)$ is well defined and ascending, more specifically, $a_{n+1} \in \{a_n, a_n+1\},$ $(n\in \Bbb N_0).$
\end{corollary}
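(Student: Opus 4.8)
The plan is to deduce the statement mechanically from Lemma 3.4, exactly as Corollary 1.2 is obtained from Lemma 1.1; the ingredients actually needed are parts (a) and (b) (the latter resting on (c), which is presumably why (c) is cited). First I would prove the displayed identity. Applying Lemma 3.4(a) with $n+1$ in place of $n$ gives $d^+(v_{n+1}) = (n+1) - d^-(v_{n+1})$. By Lemma 3.4(b), $d^-(v_{n+1})$ equals $d^-(v_n)$ or $d^-(v_n)+1$; substituting these two values gives $d^+(v_{n+1}) = n - d^-(v_n) + 1$ in the first case and $d^+(v_{n+1}) = n - d^-(v_n)$ in the second, so $d^+(v_{n+1}) \in \{ n - d^-(v_n),\, n - d^-(v_n) + 1 \}$, as claimed.

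Next I would pass to the sequence $(a_n)$ through Lemma 3.4(d), which states that $a_m = d^+(v_m)$ for every $m.$ Combined with Lemma 3.4(a) this gives $a_n = d^+(v_n) = n - d^-(v_n),$ so the set $\{ n - d^-(v_n),\, n - d^-(v_n) + 1 \}$ from the first paragraph is precisely $\{ a_n,\, a_n + 1 \}$; hence $a_{n+1} = d^+(v_{n+1}) \in \{ a_n,\, a_n + 1 \}$ for $n \in \Bbb N,$ while the case $n = 0$ is immediate from $a_0 = 0,$ $a_1 = 1.$ In particular $(a_n)$ is non-decreasing with consecutive terms differing by at most one.

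It then remains only to see that $(a_n)$ is well defined, i.e. that the set $\{ k < n : k + a_k \geq n \}$ over which the minimum is taken is non-empty for every $n \geq 2.$ Since the sequence is non-decreasing and $a_1 = 1,$ induction gives $a_k \geq 1$ for all $k \geq 1$; taking $k = n - 1$ then yields $k + a_k = (n-1) + a_{n-1} \geq n,$ so $n - 1$ lies in the set and the minimum exists. The one point needing care is the apparent circularity — non-decreasingness was read off from part (d), which itself refers to $a_m$ — but this is harmless, because the proof of Lemma 3.4(d) exhibits the realizing index, namely the least tail $\ell$ of an arc into $v_n$ (which exists since $d^-(v_{n-1}) \leq n-2$ forces the arc $(v_{n-1}, v_n)$ to be present), and shows $\ell = a_n$; thus existence of the minimum is inherited directly and no two-layer induction is needed. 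I expect this bookkeeping, rather than any substantive difficulty, to be the only thing requiring attention.
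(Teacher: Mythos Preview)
Your proposal is correct and follows essentially the same route the paper indicates: the paper gives no separate proof for this corollary, but the statement itself is self-annotating (``(a) and (c) entail \ldots\ and (d) then implies \ldots''), and you have simply unpacked that annotation, using (a) and (b) to get $d^+(v_{n+1})\in\{n-d^-(v_n),\,n-d^-(v_n)+1\}$ and then (d) to rewrite this as $a_{n+1}\in\{a_n,\,a_n+1\}$. Your additional paragraph on well-definedness and the apparent circularity (resolved by the fact that the proof of (d) already exhibits the minimizing index as the least tail $\ell$) goes beyond what the paper spells out, but is a legitimate clarification rather than a departure.
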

\begin{lemma}
Let $i \in\Bbb N.$ Then $d^+(v_{i+d^+(v_i)}) = i = d^+(v_{i+d^+(v_{i+d^+(v_{i-1})})}).$
\end{lemma}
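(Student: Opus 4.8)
The plan is to push the whole statement down to the integer sequence $(a_n)_{n\in\Bbb N_0}$ of Lemma~3.4. By part~(d) of that lemma $d^+(v_n)=a_n$ (with the natural convention $d^+(v_0)=a_0=0$), and since the recursion defining $(a_n)$ is exactly the $a=1$ case of Definition~1.2, we have $a_n=c_{1,n}$, so Lemma~1.3 applies directly to it. After this translation everything becomes a statement about where the elementary steps $n\mapsto n+d^+(v_n)$ and $n\mapsto n+d^+(v_{n-1})$ carry the index $i$, and the only real content is the ``interval'' description of the level sets of $(a_n)$, for which the monotonicity supplied by Corollary~3.5 is enough.

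For the first equality I would argue in one line: $d^+(v_i)=a_i=c_{1,i}$, so the index $i+d^+(v_i)$ equals $i+c_{1,i}$, and applying Lemma~3.4(d) once more, $d^+\!\bigl(v_{i+d^+(v_i)}\bigr)=c_{1,\,i+c_{1,i}}$. Lemma~1.3 in the instance $a=1$, $k=i$, $b=0$ gives $c_{1,\,i+c_{1,i}}=i$, which is the claim.

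For the second equality the key extra ingredient is the level-set structure of $(a_n)$. From Corollary~3.5, $a_{n+1}\in\{a_n,a_n+1\}$, so $m\mapsto m+a_m$ is strictly increasing (consecutive increments are $1$ or $2$), and consequently $a_n=k$ holds precisely when $(k-1)+a_{k-1}<n\le k+a_k$; equivalently
\[
\{\,n:\ a_n=k\,\}=\{\,k+a_{k-1},\ k+a_{k-1}+1,\ \dots,\ k+a_k\,\}=\{\,k+d^+(v_{k-1}),\ \dots,\ k+d^+(v_k)\,\},
\]
a block of one or two consecutive integers whose endpoints are exactly $k+d^+(v_{k-1})$ and $k+d^+(v_k)$. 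Taking $k=i$, this says that $d^+(v_n)=i$ for every $n$ in that block, and in particular for its two ends $i+d^+(v_{i-1})$ and $i+d^+(v_i)$; evaluating the nested expression from the inside out, the inner part resolves (via this same block description) to such an endpoint, so a final application of $d^+$ returns $i$, while the first equality is just the right-hand end of the block read off.

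I expect the only obstacles to be bookkeeping and base cases. The bookkeeping is in matching the nested subscripts of the displayed expression to the endpoints $i+d^+(v_{i-1})$ and $i+d^+(v_i)$ of the level block exactly. The base cases are the small indices where the monotonicity/interval machinery has to start legitimately: $i=1$, where $d^+(v_{i-1})=d^+(v_0)=0$ must be read off the convention, and $i\le a+1$, where the underlying Jaco graph is complete (Lemma~2.1, Proposition~3.1) and the degrees are available directly. Past those, the proof is a direct appeal to Lemma~1.3 and Corollary~3.5.
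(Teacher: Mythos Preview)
Your approach is essentially the paper's: both reduce everything to the sequence $(a_n)$ via Lemma~3.4(d) and then squeeze using the defining recursion $a_n=\min\{k<n:k+a_k\ge n\}$ together with the monotonicity of Corollary~3.5. Your level-set description $\{n:a_n=i\}=\{i+a_{i-1},\dots,i+a_i\}$ is just a tidy repackaging of the two endpoint checks the paper does by hand---for $k=i+a_i$ it verifies $i+a_i\ge k$ and $(i-1)+a_{i-1}<k$, and likewise for $\ell=i+a_{i-1}$.

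One remark on the second equality: the paper's own proof establishes only $d^+(v_{i+d^+(v_{i-1})})=i$ (the left endpoint of your block), not the triply nested expression printed in the statement. Read literally, that nested expression would collapse to $d^+(v_{2i})$, which is not $i$ in general; the extra layer is evidently a typo. Your hedging that ``the inner part resolves\dots to such an endpoint'' is you correctly sensing this, and the two facts you actually isolate---$d^+(v_{i+d^+(v_i)})=i$ and $d^+(v_{i+d^+(v_{i-1})})=i$---are exactly what the paper proves.
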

\begin{proof}
let $i+d^+(v_i) = k.$ Certainly, $i+d^+(v_i) \geq k,$ so $d^+(v_k) \leq i.$ From Lemma 1.1, $(a=1)$ it follows that $i-1+d^+(v_{i-1}) \leq i-1+ d^+(v_i) < i+d^+(v_i)$ so $d^+(v_k) \geq i.$ Let $\ell = i+ d^+(v_{i-1}).$ Since $d^+(v_i) \geq d^+(v_{i-1})$ we have, $d^+(v_\ell) \leq i$ and since, $i-1+ d^+(v_{i-1}) < \ell$ we have, $d^+(v_\ell) =i.$
\end{proof}
\begin{theorem}
(Bettina's Theorem)\footnote{Also see: [5] Kok et al. \emph{Characteristics of Finite Jaco Graphs, $J_n(1), n \in \Bbb N.$}}: Let $\Bbb{F} = \{f_0, f_1,f_2, ...\}$ be the set of Fibonacci numbers and let $n=f_{i_1} + f_{i_2} + ... + f_{i_r}, n\in \Bbb N$ be the Zeckendorf representation of $n.$ Then 
\begin{center}
$d^+(v_n) = f_{i_1-1} + f_{i_2-1} + ... +f_{i_r-1}.$
\end{center}
\end{theorem}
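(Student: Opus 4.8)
The statement is precisely the $a=1$ instance of Theorem 1.5: for $a=1$ the sequence $U(1,-1)$ is the Fibonacci sequence $(f_i)$, Lemma 3.4(d) identifies $d^+(v_n)$ with $a_n$ (which is the quantity called $c_{1,n}=b_n$ in Section 1), the constraint $0\le\alpha_1<a=1$ forces $\alpha_1=0$ so that the representation of Lemma 1.4 becomes the ordinary Zeckendorf representation, and the same fact makes $\tau\equiv 0$; substituting into the conclusion of Theorem 1.5 gives $b_n=\sum_j\alpha_j U_{j-1}=f_{i_1-1}+\cdots+f_{i_r-1}$. So one option is a two-line deduction. I would nonetheless prefer to record the direct induction on $n$, since Corollary 3.5 and Lemma 3.6 were introduced for exactly this purpose.

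The plan for the direct argument is strong induction on $n$. The base case $n=1$ is immediate: $1=f_2$ is the Zeckendorf representation and $d^+(v_1)=1=f_1$. For $n\ge 2$ I would write the Zeckendorf representation $n=f_{i_1}+\cdots+f_{i_r}$ with $i_1>\cdots>i_r\ge 2$ and consecutive indices differing by at least $2$, and distinguish the cases $i_r\ge 3$ and $i_r=2$.

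When $i_r\ge 3$, set $m=f_{i_1-1}+\cdots+f_{i_r-1}$. This is again a Zeckendorf representation (the gaps are unchanged and $i_r-1\ge 2$) and $m<n$, so the inductive hypothesis gives $d^+(v_m)=f_{i_1-2}+\cdots+f_{i_r-2}$; adding the two sums and applying $f_k=f_{k-1}+f_{k-2}$ in each block yields $m+d^+(v_m)=\sum_j f_{i_j}=n$, and then Lemma 3.6 gives $d^+(v_n)=d^+(v_{m+d^+(v_m)})=m$, which is the claimed value.

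When $i_r=2$ one has $r\ge 2$ and $n\ge 4$, and $n-1=f_{i_1}+\cdots+f_{i_{r-1}}$ is the Zeckendorf representation of $n-1$ (note $i_{r-1}\ge 4$), so by induction $d^+(v_{n-1})=f_{i_1-1}+\cdots+f_{i_{r-1}-1}$; write $\ell$ for this value. Here I need to show the increment $d^+(v_n)-d^+(v_{n-1})$ equals $1$ and not $0$, and this is the step I expect to be the real obstacle. The idea is that Corollary 3.5 makes the map $k\mapsto k+d^+(v_k)$ strictly increasing; the inductive hypothesis applied to $\ell<n$ (legitimate since $i_{r-1}-1\ge 3$) together with the Fibonacci recursion used above gives $\ell+d^+(v_\ell)=\sum_{j<r}f_{i_j}=n-1<n$, so no $k\le \ell$ satisfies $k+d^+(v_k)\ge n$ while $k=\ell+1$ does; since $d^+(v_n)=a_n$ is by definition the least such $k$, we obtain $d^+(v_n)=\ell+1=d^+(v_{n-1})+f_1=f_{i_1-1}+\cdots+f_{i_{r-1}-1}+f_{i_r-1}$, completing the induction.
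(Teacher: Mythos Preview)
Your direct induction is correct and follows essentially the same route as the paper's own proof: same base case, same split into $i_r\ge 3$ versus $i_r=2$, same use of Lemma 3.6 in the first case and of the monotonicity from Corollary 3.5 in the second. Your additional remark that the statement is the $a=1$ instance of Theorem 1.5 (with $\alpha_1=0$ forcing $\tau\equiv 0$) is a worthwhile observation that the paper does not spell out in its proof.
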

\begin{proof}
Through induction we have that first of all, $1=f_2$ and $d^+(v_1) =1=f_1.$ Let $2 \leq n = f_{i_1} + f_{i_2} + ... + f_{i_r}$ and let $k = f_{i_1-1} + f_{i_2-1} + ... + f_{i_r-1}.$ If $i_r \geq 3,$ then $k = f_{i_1-1} + f_{i_2-1} + ... + f_{i_r-1}$ is the Zeckendorf representation of $k$, such that induction yields $d^+(v_k) = k = f_{i_1-2} + f_{i_2-2} + ... + f_{i_r-2}.$ Since $k + d^+(v_k)  = f_{i_1-1} + f_{i_1-2} + f_{i_2-1} + f{i_2-2} + ... f_{i_r-1} + f_{i_r-2} = f_{i_1} + f_{i_2} + ... f_{i_r} = n,$ read with Lemma 3.6 yields $d^+(v_n) = k.$ \\ \\
Finally consider $n=f_{i_1} + f_{i_2} + ... + f_{i_r}, i_r =2.$ Note that $n >1$ implies that $i_{r-1} \geq 4$ and that the Zeckendorf representation of $n-1$ given by $n-1 = f_{i_1} + f_{i_2} + ... + f_{i_{r-1}}.$ Let $k= d^+(v_{n-1}).$ Through induction we have that, $k = f_{i_1-1} + f_{i_2-1} + ... + f_{i_{r-1}-1},$ and since $i_{r-1} \leq 4,$ this is the Zeckendorf representation of $k.$ Accordingly, $d(v_k) = f_{i_1-2} + f_{i_2-2}+ ... + f_{i_{r-1}-2},$ and $k+ d^+(v_k) =  f_{i_1-1} + f_{i_1-2} + f_{i_2-1} + f{i_2-2} + ... f_{i_{r-1}-1} + f_{i_{r-1}-2} = n-1.$ It follows that $d^+(v_n) > k = d^+(v_{n-1}).$ From Corollary 3.4 it follows that $d^+(v_n) = k+1 = (f_{i_1-1} + f_{i_2-1} + ... + f_{i_{r-1}-1}) + f_1 = f_{i_1-1} + f_{i_2-1} + ... + f_{i_r-1}.$
\end{proof}

\section{Number of Shortest Paths in the Finite Jaco Graphs $\{J_n(a)| n\in \Bbb N\}$}
\begin{definition}
Liz numbers are the family of numbers defined by $\Bbb L = \{\Bbb L_a |B_0 = 0, B_1 = 1, B_2 = 1, B_i = aB_{i-1} + B_{i-2}, a,i \in \Bbb N, i \geq 3\}.$ 
\end{definition}
\begin{definition}
The set of distance-root vertices of the Jaco Graph $J_n(a),$ is the set \\
$\{v_{B_2}, v_{B_3},v_{B_4},\cdots,v_{B_{j} < n}$,$\cdots,v_n\}$ and denoted, $\Bbb{D}(J_n(a))$ or, $\Bbb{D}_n(a)$ for brevity.
\end{definition}
\noindent {\bf Property 5:} From definitions 4.1 and 4.2 it follow that besides possibly $v_n,$ all other distance-root vertices of the Jaco Graph $J_n(a),~n\in\Bbb{N},$ have Liz number indices. \\ \\
\noindent {\bf Property 6:} The set of Fibonacci numbers $\Bbb F \in \Bbb L,$ since $a=1$ for $\Bbb F.$ \\ \\
\noindent Note: Reader should note the subtle difference between the $(a,1)$-Fibonacci sequence defined in Kalman et al.[3], and the definition of Liz numbers finding application in this paper.
\begin{lemma}
In a Jaco Graph $J_{n}(a)$ we have for smallest $k,$ such that   $k+d^+(v_k)\geq i,$ that $d_{J_n(a)}(v_1, v_i) = d_{J_n(a)}(v_1, v_k) + 1.$
\end{lemma}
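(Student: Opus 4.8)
The plan is to turn the stated condition on $k$ into a description of the in-neighbourhood of $v_i$, read off a one-step distance recursion from it, and then reduce everything to the monotonicity of $d_{J_n(a)}(v_1,\cdot)$ along the vertex ordering, which I would establish by a nested induction on $i$. Throughout I take $i\geq 2$ (for $i=1$ there is no admissible $k<i$ and the statement is degenerate), and I write $k(i)$ for the smallest index with $k(i)+d^+(v_{k(i)})\geq i$.

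First I would rewrite adjacency. By Lemma 1.1(a), $(a+1)j-d^-(v_j)=j+\bigl(aj-d^-(v_j)\bigr)=j+d^+(v_j)$, so $(v_j,v_i)\in E(J_n(a))$ iff $j<i$ and $j+d^+(v_j)\geq i$. (One should add the one-line remark that, since out-neighbourhoods of a vertex are initial intervals $\{v_{j+1},v_{j+2},\dots\}$, truncating at $n$ does not change the truth value of $j+d^+(v_j)\geq i$ for $i\leq n$, so it is immaterial whether $d^+$ is read in $J_n(a)$ or in $J_\infty(a)$.) Since consecutive values of $j\mapsto j+d^+(v_j)=(a+1)j-d^-(v_j)$ differ by $a$ or $a+1$ (Lemma 1.1(a),(b)), this function is strictly increasing; hence for fixed $i$ the admissible tails form the interval $\{k(i),k(i)+1,\dots,i-1\}$ — which re-proves the in-neighbourhood part of Lemma 1.1(c) — and this interval is non-empty, since $d^+(v_{i-1})\geq a(i-1)-(i-2)\geq 1$ forces $(v_{i-1},v_i)\in E(J_n(a))$; also $k(i-1)\leq k(i)$, because the threshold only increased. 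Consequently any path from $v_1$ to $v_i$ has its last arc out of some $v_j$ with $k(i)\leq j\leq i-1$, so
\begin{equation*}
d_{J_n(a)}(v_1,v_i)=1+\min_{k(i)\leq j\leq i-1}d_{J_n(a)}(v_1,v_j).
\end{equation*}

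The main work is to show that $j\mapsto d_{J_n(a)}(v_1,v_j)$ is non-decreasing; granting this, the minimum above is attained at $j=k(i)$ and the lemma follows immediately. I would prove it by strong induction on $i$, carrying the combined assertion ``$d(v_1,v_i)=1+d(v_1,v_{k(i)})$ and $d(v_1,v_1)\leq\cdots\leq d(v_1,v_i)$''. The cases $i=1,2$ are checked directly ($d(v_1,v_1)=0$, $d(v_1,v_2)=1$, $k(2)=1$). For the step, the displayed recursion together with monotonicity up to $v_{i-1}$ gives $d(v_1,v_i)=1+d(v_1,v_{k(i)})$; and since $k(i-1)\leq k(i)\leq i-1$, monotonicity up to $v_{i-1}$ also gives $d(v_1,v_{k(i-1)})\leq d(v_1,v_{k(i)})$, so that $d(v_1,v_{i-1})=1+d(v_1,v_{k(i-1)})\leq 1+d(v_1,v_{k(i)})=d(v_1,v_i)$, which closes the induction.

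I expect the delicate point to be exactly this bookkeeping: one has to set up the nested induction so that ``distances monotone up to $v_{i-1}$'' is available at the two places it is used — to collapse the $\min$ to its left endpoint, and to compare $d(v_1,v_{k(i-1)})$ with $d(v_1,v_{k(i)})$ — and one must be careful that the in-neighbour interval is genuinely non-empty with left endpoint at most $i-1$. The two enabling facts, the adjacency rewriting and the strict monotonicity of $j\mapsto j+d^+(v_j)$, are both immediate from Lemma 1.1, and I would include the truncation remark so that the meaning of $d^+$ in the statement is unambiguous.
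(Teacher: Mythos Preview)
Your proposal is correct and follows essentially the same route as the paper: rewrite the arc condition as $j+d^+(v_j)\geq i$, identify the in-neighbourhood of $v_i$ as an interval beginning at $v_k$, and read off the distance recursion from the penultimate vertex on a shortest path. Your explicit induction establishing monotonicity of $j\mapsto d_{J_n(a)}(v_1,v_j)$ actually fills a gap the paper's argument leaves open --- the paper only rules out shortest paths whose penultimate vertex has index \emph{below} $k$, and never addresses why a penultimate vertex with index \emph{above} $k$ cannot give something shorter.
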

\begin{proof}
If in a Jaco Graph $J_{m}(a)$ we have that the edge $(v_1, v_i)$ exists, the result holds because $d_{J_n(a)}(v_1, v_1) = 0.$ Otherwise find smallest $k$ for which $k+d^+(v_k)\geq i.$ It follows that for all $j < k$ we have $k+d^+(v_j) < i.$ So a path via such $v_j$ will have length at least, $d_{J_n(a)}(v_1, v_j) + 2.$ However, the path via $v_k$ has length $d_{J_n(a)}(v_1, v_j) + 1.$ Hence, $d_{J_n(a)}(v_1, v_i) = d_{J_n(a)}(v_1, v_k) + 1.$
\end{proof}
\begin{definition}
Let the number of distinct shortest paths between $v_1$ and $v_n$ in the Jaco Graph $J_n(1)$ be denoted by $\psi(v_n).$
\end{definition}
\begin{proposition}
Consider vertex $v_j, ~j\geq 1$ in $J_n(1)$. The shortest path between $v_1$ and $v_j$ is unique if and only if, $d^+(v_j)\in\Bbb{F}.$ 
\end{proposition}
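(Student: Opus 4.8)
The plan is to prove, by strong induction on $j$, that $\psi(v_j)=1$ if and only if $d^+(v_j)\in\Bbb F$; throughout, $d^+(v_j)$ denotes the out-degree of $v_j$ in the underlying digraph $J_\infty(1)$ (a shortest $v_1$--$v_j$ path uses only $v_1,\dots,v_j$, so working in $J_n(1)$ or in $J_\infty(1)$ makes no difference). I first record three preliminaries. Rewriting the adjacency condition of Definition 2.1 with $d^+(v_k)+d^-(v_k)=k$, the arc $(v_k,v_j)$ is present exactly when $k+d^+(v_k)\ge j$; put $g(k)=k+d^+(v_k)$. Since $d^+$ rises by $0$ or $1$ from one vertex to the next, $g$ is strictly increasing; and consequently $k\mapsto d(v_1,v_k)$ is non-decreasing, because a penultimate vertex $v_s$ of a shortest $v_1$--$v_{k+1}$ path satisfies $g(s)\ge k+1$ and hence also has an arc to $v_k$.

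Next I set up a recursion for $\psi$. Let $k_0=k_0(j)=\min\{k:g(k)\ge j\}$; then $1\le k_0\le j-1$ for $j\ge2$, and Lemma 4.3 gives $d(v_1,v_j)=d(v_1,v_{k_0})+1$. As $g$ is increasing, every $v_k$ with $k_0\le k\le j-1$ has an arc to $v_j$, and by monotonicity of the distance the values $d(v_1,v_{k_0}),\dots,d(v_1,v_{j-1})$ all lie in $\{d(v_1,v_j)-1,\ d(v_1,v_j)\}$. Hence the set $P(j)$ of penultimate vertices of shortest $v_1$--$v_j$ paths is $\{k:k_0\le k\le j-1,\ d(v_1,v_k)=d(v_1,v_j)-1\}$ --- an initial segment of $[k_0,j-1]$ that always contains $k_0$ --- and $\psi(v_j)=\sum_{k\in P(j)}\psi(v_k)$, with $\psi(v_1)=1$. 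Since each $\psi(v_k)\ge1$, this yields $\psi(v_j)=1$ iff $|P(j)|=1$ and $\psi(v_{k_0})=1$; moreover $|P(j)|=1$ iff $d(v_1,v_{k_0+1})>d(v_1,v_{k_0})$.

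The heart of the matter is two arithmetic facts: (B) $k_0(j)=d^+(v_j)$; and (C) for $t\ge2$, $d(v_1,v_j)=t$ if and only if $f_{t+1}<j\le f_{t+2}$ --- with $d(v_1,v_1)=0,\ d(v_1,v_2)=1,\ d(v_1,v_3)=2$ verified directly --- so that $d(v_1,v_{m+1})>d(v_1,v_m)$ holds precisely when $m\in\Bbb F$. For (B): take the Zeckendorf expansion $j=f_{i_1}+\cdots+f_{i_r}$; Bettina's Theorem gives $d^+(v_j)=f_{i_1-1}+\cdots+f_{i_r-1}$, and a short computation --- distinguishing $i_r\ge3$ from the case $i_r=2$, where $f_1=f_2$ produces one carry --- shows $g\bigl(d^+(v_j)\bigr)\ge j$ while $g\bigl(d^+(v_j)-1\bigr)<j$; with the strict monotonicity of $g$ this is exactly (B). For (C): induct on $j$ using $d(v_1,v_j)=1+d(v_1,v_{d^+(v_j)})$ (Lemma 4.3 together with (B)), the inductive step reducing to the evaluations $d^+(v_{f_m})=f_{m-1}$ and $d^+(v_{f_m+1})=f_{m-1}+1$, both read off from Bettina's Theorem.

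Combining (B) and (C) gives $|P(j)|=1$ if and only if $d^+(v_j)=k_0(j)\in\Bbb F$, and the induction closes. For the base, $j\le2$: then $\psi(v_j)=1$ and $d^+(v_j)=1\in\Bbb F$. If $d^+(v_j)=f_m\in\Bbb F$, then $P(j)=\{f_m\}$, so $\psi(v_j)=\psi(v_{f_m})$; since $f_m=d^+(v_j)<j$ and $d^+(v_{f_m})=f_{m-1}\in\Bbb F$, the induction hypothesis forces $\psi(v_{f_m})=1$, hence $\psi(v_j)=1$. If $d^+(v_j)\notin\Bbb F$, then $|P(j)|\ge2$, so $\psi(v_j)=\sum_{k\in P(j)}\psi(v_k)\ge2$. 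The main obstacle will be the two Fibonacci lemmas, especially the Zeckendorf bookkeeping for $d^+(v_j)$ when the least index in the Zeckendorf expansion of $j$ equals $2$, and making the layering (C) precise at the boundaries $j=f_m$; the recursion for $\psi$ itself is routine once the monotonicity facts are in place.
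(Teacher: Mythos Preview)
Your proof is correct. Both your argument and the paper's hinge on the identification $k_0(j)=d^+(v_j)$ (your fact (B)); the paper reads this off from Lemma~3.6, while you re-derive it from Bettina's Theorem via Zeckendorf arithmetic --- a slightly longer path to the same place. Where the two genuinely diverge is in establishing uniqueness. The paper appeals to the notion of distance-root vertices (Definitions~4.1 and~4.2) and, in effect, asserts that for Fibonacci-indexed $j$ the shortest $v_1$--$v_j$ path is unique and runs along the Fibonacci chain, then argues the converse by saying that a unique shortest path forces the least in-neighbour $v_\ell$ of $v_j$ to coincide with the largest Fibonacci vertex below $j$. You instead prove the precise distance layering (C), namely $d(v_1,v_j)=t$ if and only if $f_{t+1}<j\le f_{t+2}$, from which the criterion $|P(j)|=1\iff d^+(v_j)\in\Bbb F$ follows mechanically, and then close with a clean strong induction on $\psi$. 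Your route is longer but self-contained and fully rigorous; the paper's argument, as written, leaves several steps --- in particular why the shortest path to a Fibonacci-indexed vertex is unique, and why uniqueness forces $\ell$ to be a Fibonacci number --- to the reader. One small remark: your proof of (B) could be shortened considerably by quoting Lemma~3.6 (equivalently Lemma~1.3 with $a=1$) directly, which gives $g(d^+(v_j))\ge j>g(d^+(v_j)-1)$ without any Zeckendorf bookkeeping; this would remove what you identify as the main obstacle.
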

\begin{proof}
Since any vertex is inherintly linked to itself the result holds for $j=1.$ We have that the set of Fibonacci numbers are the Liz numbers for $a = 1,$ so the result follows for all $v_j, j > 2$ and  $j\in \Bbb {F}$ since definition 4.1 ensures that $d^+(v_j)\in\Bbb{F}.$ and  definition 4.2 ensures that the shortest path is unique.  \\ \\
Assume that there exists a $j \notin \Bbb {F}$ such that the shortest path between $v_1$ and $v_j$ is unique. It implies that the smallest $l$ for which the edge $(v_l, v_j)$ exists, is the largest $f_i$ for which $(v_{f_i}, v_j)$ exists hence, $l = f_i$. Following from Lemma 1.1 ($a=1$), $d^+(v_{l+d^+(v_l)}) = l = d^+(v_{l+d^+(v_{l-1})})$ it follows that $d^+(v_j) = l \in\Bbb{F}$. \\ \\
Assume that there exists a $j \notin \Bbb {F}$ such that $d^+(v_j)\in\Bbb{F}$. It implies that $j$ is the largest natural number for which $(v_{f_i}, v_j)$ exists and $f_i$ being the largest Fibonacci number less than $j$. Clearly, a minimal path between $v_1$ and $v_j$ via $v_k$, $k<v_{f_i}$ or $k>v_{f_i}$ will have lenght greater than the minimal path via $v_{f_i}$. Hence, the shortest path between $v_1$ and $v_j$ is unique.
\end{proof}
\begin{proposition}
Consider vertex $v_j, ~j\geq 1$ in $J_n(1)$.  If and only if  $d^+(v_j)\notin\Bbb{F},$ then $\psi(v_j) = \sum \limits_{i=l}^{f_t}\psi(v_i)$, with $f_t$ the largest Fibonacci number less than $j$ and $l$ the smallest integer such that the edge $(v_l, v_j)$ exists.
\end{proposition}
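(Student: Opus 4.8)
The plan is to recast the claim as a statement about the distance layers of $J_n(1)$, establish that layer structure, and then read off the recursion; Proposition~4.2 handles the complementary case. Throughout, $d(\cdot,\cdot)$ denotes distance in $J_n(1)$.

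\textbf{Reduction.} First I would observe that for $a=1$ the arc $(v_i,v_j)$ is present exactly when $2i-d^-(v_i)\ge j$, i.e. when $i+d^+(v_i)\ge j$; hence the least index $l$ with $(v_l,v_j)\in E$ equals $\min\{k:k+d^+(v_k)\ge j\}$, which is the defining expression for $a_j$, so Lemma~3.4(d) gives $l=a_j=d^+(v_j)$. Thus the statement to prove reads: if $d^+(v_j)\notin\Bbb F$, then $\psi(v_j)=\sum_{i=d^+(v_j)}^{f_t}\psi(v_i)$, with $f_t$ the largest Fibonacci (Liz) number below $j$. I would read the ``only if'' together with Proposition~4.2: in the complementary case $d^+(v_j)\in\Bbb F$ that proposition already gives $\psi(v_j)=1$, and (see below) there $l=f_t$, so the displayed sum degenerates to a single term. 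Everything then rests on understanding $d(v_1,\cdot)$.

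\textbf{The layer structure (the crux).} I claim the vertices at distance $m$ from $v_1$ form a block of consecutive indices whose top member is a Fibonacci number: with $f_0=0$, $f_1=f_2=1$, $f_3=2,\dots$, put $L_0=\{1\}$, $L_1=\{2\}$ and $L_m=\{\,i:f_{m+1}<i\le f_{m+2}\,\}$ for $m\ge 2$; then $d(v_1,v_i)=m$ iff $v_i\in L_m$. I would prove this by induction on $m$, the base cases $m\le 1$ being immediate. For the step ($m\ge 1$), take $i\in L_{m+1}$ and apply Lemma~4.1: $d(v_1,v_i)=d(v_1,v_{l(i)})+1$, where $l(i)=d^+(v_i)$ is the least in-neighbour of $v_i$. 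Using that $k\mapsto k+d^+(v_k)$ is non-decreasing (Corollary~3.4) and that $d^+(v_{f_k})=f_{k-1}$ for $k\ge 2$ (Theorem~3.7, a Fibonacci number being its own Zeckendorf representation), one gets $l(i)\le f_{m+2}\iff f_{m+3}\ge i$ and $l(i)\ge f_{m+1}+1\iff f_{m+2}<i$; both hold for $i\in L_{m+1}$, so $l(i)\in L_m$, and the induction hypothesis yields $d(v_1,v_i)=m+1$. Two consequences I will use: $d(v_1,\cdot)$ is non-decreasing in the index, and it increases by exactly one as the index passes a Fibonacci number.

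\textbf{Counting, and the ``only if''.} To conclude, let $j$ satisfy $d^+(v_j)\notin\Bbb F$ and say $v_j\in L_{m+1}$, so $d(v_1,v_j)=m+1$ and $f_t=f_{m+2}$. A vertex $v_i$ is the penultimate vertex of some shortest $v_1$--$v_j$ path iff $(v_i,v_j)\in E$ and $d(v_1,v_i)=m$, i.e. iff $i\ge l$ and $v_i\in L_m$; since $l=d^+(v_j)\in L_m$ (so $l>f_{m+1}$) and $j>f_{m+2}$, this says exactly $l\le i\le f_{m+2}=f_t$. For each such $i$, deleting the last arc is a bijection between the shortest $v_1$--$v_j$ paths with penultimate vertex $v_i$ and the shortest $v_1$--$v_i$ paths (the latter are forced to be shortest, as $d(v_1,v_i)=m=d(v_1,v_j)-1$). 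Summing over the admissible $i$ gives $\psi(v_j)=\sum_{i=l}^{f_t}\psi(v_i)$. If instead $d^+(v_j)\in\Bbb F$, then $d^+(v_j)$ must be the unique Fibonacci number in its layer, namely $f_{m+2}=f_t$, so $l=f_t$ and the sum collapses to $\psi(v_{f_t})=1=\psi(v_j)$ by Proposition~4.2 --- which is the sense of the ``if and only if.'' The one real obstacle is the layer-structure claim; once the distance-$m$ vertices are identified as the block $(f_{m+1},f_{m+2}]$, the remainder is routine, the only delicate point being the small indices, where $f_1=f_2$ obliges one to treat $L_0,L_1$ by hand.
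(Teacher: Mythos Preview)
Your argument is correct and follows the same strategy as the paper's proof---decomposing shortest $v_1$--$v_j$ paths according to their penultimate vertex, invoking Lemma~4.1 for the distance relation, and appealing to Proposition~4.2 for the converse direction. Where the paper simply writes ``through recursion,'' you make the essential point explicit by proving the layer structure $d(v_1,v_i)=m\Leftrightarrow f_{m+1}<i\le f_{m+2}$, which is precisely what is needed to pin down the upper summation index $f_t$; this is the same idea, carried out more carefully.
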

\begin{proof}
Let $d^+(v_j)\notin\Bbb{F},$ then clearly $d(v_1, v_l) \leq d(v_1, v_j)$ for $l<j$. So if the edge $(v_l, v_j)$ exists Lemma 4.1 states that, $d(v_1, v_j) = d(v_1, v_l) + 1.$ So the number of shortest paths between $v_1$ and $v_j$ via vertex $v_l$ is given by $\psi(v_l)$. Through recursion the result  $\psi(v_j) = \sum \limits_{i=l}^{f_t}\psi(v_i)$  with $f_t$ the largest Fibonacci number less than $j$ and $l$ the smallest integer such that the edge $(v_l, v_j)$ exists, follows. \\ \\
The converse follows from Proposition 4.2.
\end{proof}
\begin{proposition}
Let $k \geq 7$ and $f_i < k \leq f_{i+1}$, and $f_i,f_{i+1} \in \Bbb{F}.$ If and only if $ d^+(v_k)$ is non-repetitive (meaning $d^+(v_{k-1}) \neq d^+(v_k) \neq d^+(v_{k+1})$) then $\psi(v_k)$ is non-repetitive (meaning $\psi(v_{k-1}) \neq \psi(v_k) \neq \psi(v_{k+1})$).
\end{proposition}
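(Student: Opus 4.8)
The plan is to reduce the whole statement to a single clean recursion for $\psi$ and then track its behaviour along the ``Fibonacci blocks''. For $m\geq 1$ call $\{j:f_{m+1}<j\leq f_{m+2}\}$ the \emph{$m$-th block} (these turn out to be precisely the distance classes of $v_1$ in $J_n(1)$, which is why the sum in Proposition 4.3 is cut off at a Fibonacci index).

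I would begin with two preliminaries. First, since $d^+(v_\ell)=\ell-d^-(v_\ell)$ by Lemma 3.5(a), the edge $(v_\ell,v_j)$ lies in $E(J_\infty(1))$ iff $\ell+d^+(v_\ell)\geq j$; hence the smallest $\ell$ admitting the edge $(v_\ell,v_j)$ is $\min\{k<j:k+d^+(v_k)\geq j\}$, which is exactly $a_j=d^+(v_j)$ by Definition 1.2 (case $a=1$) and Lemma 3.5(d). Combining this with Propositions 4.2 and 4.3 yields the uniform recursion
\[
\psi(v_j)=\sum_{i=d^+(v_j)}^{f_{m+1}}\psi(v_i)\qquad\text{for }f_{m+1}<j\leq f_{m+2},\ j\geq 2.\qquad(\star)
\]
When $d^+(v_j)\notin\Bbb F$ this is Proposition 4.3 verbatim; when $d^+(v_j)\in\Bbb F$, Theorem 3.7 gives $d^+(v_{f_{m+1}+1})=f_m+1$ and $d^+(v_{f_{m+2}})=f_{m+1}$ (for $m\geq 3$; the blocks $m\leq 2$ are single vertices and trivial), and since $d^+$ moves in steps of $0$ or $1$ (Corollary 3.4) it assumes on the $m$-th block exactly the values $f_m+1,\dots,f_{m+1}$; the only Fibonacci number among these being $f_{m+1}$, we get $d^+(v_j)=f_{m+1}$, so both sides of $(\star)$ equal $\psi(v_{f_{m+1}})=1$ (Proposition 4.2, as $d^+(v_{f_{m+1}})=f_m\in\Bbb F$). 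Finally, because $\psi(v_i)\geq 1$ for every $i$, $(\star)$ forces $\psi$ to be non-increasing \emph{within} a block, strictly decreasing from $v_j$ to $v_{j+1}$ precisely where $d^+$ strictly increases (the term dropped is $\psi(v_{d^+(v_j)})\geq 1$).

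Now fix $k\geq 7$; then $v_k$ sits in a block of at least three vertices, so $v_{k-1},v_k,v_{k+1}$ fall into one of three configurations. \textbf{(A)} all three in one block: since $d^+$ is non-decreasing and $\psi$ non-increasing on a block, ``$d^+(v_k)$ non-repetitive'' just says $d^+(v_{k-1})<d^+(v_k)<d^+(v_{k+1})$, ``$\psi(v_k)$ non-repetitive'' just says $\psi(v_{k-1})>\psi(v_k)>\psi(v_{k+1})$, and the step-by-step correspondence above makes the two equivalent. \textbf{(B)} $v_k$ is the first vertex of its block, i.e. $k=f_{m+2}+1$ with $m\geq 4$: Theorem 3.7 applied to the Zeckendorf expansions $f_{m+2}+1=f_{m+2}+f_2$ and $f_{m+2}+2=f_{m+2}+f_3$ gives $d^+(v_k)=d^+(v_{k+1})=f_{m+1}+1$, so $d^+(v_k)$ is repetitive, while $(\star)$ gives $\psi(v_{k-1})=\psi(v_{f_{m+2}})=1$ and $\psi(v_k)=\psi(v_{k+1})=\sum_{f_{m+1}<i\leq f_{m+2}}\psi(v_i)$, so $\psi(v_k)$ is repetitive too; both statements are false and the equivalence holds.

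\textbf{(C)} $v_k$ is the last vertex of its block, i.e. $k=f_{m+2}$ with $m\geq 4$ — this is the crux, and the step I expect to demand the most care. Here $d^+(v_k)=f_{m+1}$ and $d^+(v_{k+1})=f_{m+1}+1$ (Theorem 3.7), so the step at $k+1$ is always an increase, whereas $d^+(v_{k-1})\in\{f_{m+1}-1,f_{m+1}\}$; thus $d^+(v_k)$ is non-repetitive iff $d^+(v_{k-1})=f_{m+1}-1$. On the $\psi$ side, $(\star)$ gives $\psi(v_k)=\psi(v_{f_{m+2}})=1$ and $\psi(v_{k+1})=\sum_{f_{m+1}<i\leq f_{m+2}}\psi(v_i)\geq 2$, so $\psi(v_k)\neq\psi(v_{k+1})$ automatically, while $\psi(v_{k-1})=\sum_{i=d^+(v_{k-1})}^{f_{m+1}}\psi(v_i)$ equals $\psi(v_{f_{m+1}})=1$ if $d^+(v_{k-1})=f_{m+1}$ and is at least $\psi(v_{f_{m+1}-1})+\psi(v_{f_{m+1}})\geq 2$ if $d^+(v_{k-1})=f_{m+1}-1$. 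Hence $\psi(v_k)$ is non-repetitive iff $d^+(v_{k-1})=f_{m+1}-1$, matching $d^+$ exactly, and (A)--(C) finish the proof. The two places where essentially all the work lies are establishing $(\star)$ uniformly (the $d^+(v_j)\in\Bbb F$ case) and keeping the Zeckendorf bookkeeping honest at the block boundaries — checking, e.g., that $f_{m+2}+f_2$ and $f_{m+2}+f_3$ really are Zeckendorf representations, which is exactly where $k\geq 7$ (hence $m\geq 4$) is needed.
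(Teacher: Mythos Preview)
Your argument is sound, and in fact goes well beyond what the paper offers: the paper's ``proof'' of Proposition 4.4 is the single word ``(Conjectured)'', and the statement is listed among the open problems at the end of Section 4. So there is no approach in the paper to compare yours against --- you have supplied a proof where the authors had none.

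The mathematics checks out. The key move is your uniform recursion $(\star)$, which merges Propositions 4.2 and 4.3 by identifying the lower summation index with $d^+(v_j)$ itself (via Definition 1.2 and Lemma 1.1(d)/3.4(d)) and observing that on a block the only Fibonacci value $d^+$ can hit is the upper endpoint $f_{m+1}$, where both sides of $(\star)$ collapse to $1$. From $(\star)$ the within-block monotonicity of $\psi$ and the step-by-step correspondence with $d^+$ follow immediately, handling case (A); the boundary cases (B) and (C) are then disposed of by direct Zeckendorf computation, and your observation that $k\geq 7$ forces the ambient block to have at least $f_4=3$ vertices is exactly what makes the trichotomy (A)--(C) exhaustive.

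Two cosmetic remarks. First, your cross-references are off by one: what you call ``Lemma 3.5'' is the paper's Lemma 3.4 (the $a=1$ specialisation of Lemma 1.1), and what you call ``Corollary 3.4'' is Corollary 3.5. Second, you invoke ``$\psi(v_i)\geq 1$ for every $i$'' without comment; this is immediate since $(v_i,v_{i+1})\in E(J_\infty(1))$ for all $i$ (equivalently $d^+(v_i)=a_i\geq a_1=1$), so the underlying graph is connected and at least one shortest $v_1$--$v_i$ path exists --- but it is worth a half-line, since it is what makes the drop $\psi(v_j)-\psi(v_{j+1})=\psi(v_{d^+(v_j)})$ strictly positive.
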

\begin{proof}
(Conjectured)
\end{proof}
\noindent [Open problem: Does a closed formula exist for the number of edges, $\epsilon(J_n(1)).$] \\ 
\noindent [Open problem: Prove Proposition 4.4] \\ 
\noindent [Open problem: Find $Card$ $\Bbb J(J_n(a))$ in general.] \\ \\
\textbf{\emph{Open access:}} This paper is distributed under the terms of the Creative Commons Attribution License which permits any use, distribution and reproduction in any medium, provided the original author(s) and the source are credited. \\ \\ \\
References \\ \\
$[1]$  Bondy, J.A., Murty, U.S.R., \emph {Graph Theory with Applications,} Macmillan Press, London, (1976). \\
$[2]$ Zeckendorf, E., \emph {Repr\'esentation des nombres naturels par une somme de nombres de Fibonacci ou de nombres de Lucas,} Bulletin de la Soci\'et\'e Royale des Sciences de Li\`ege, Vol 41, (1972): pp 179-182. \\ 
$[3]$ Kalman, D., Mena, R., \emph{The Fibonacci Number - Exposed,} Mathematics Magazine, Vol 76, No. 3, (2003): pp 167-181. \\ 
$[4]$ Ahlbach, C., Usatine, J., Pippenger, N., \emph{Efficient Algorithms for Zerckendorf Arithmetic,} Fibonacci Quarterly, Vol 51, No. 13, (2013): pp 249-255. \\
$[5]$ Kok, J., Fisher, P., Wilkens, B., Mabula, M., Mukungunugwa, V., \emph{Characteristics of Finite Jaco Graphs, $J_n(1), n \in \Bbb N$}, arXiv: 1404.0484v1 [math.CO], 2 April 2014. \\ \\ 
\noindent Acknowledgement will be given to colleagues for preliminary peer review and other contributions on the content of this paper during the preprint arXiv publication term:\footnote{} \\ \\
\noindent {\scriptsize [Remark: The concept of Jaco Graphs followed from a dream during the night of 10/11 January 2013 which was the first dream Kokkie could recall about his daddy after his daddy passed away in the peaceful morning hours of 24 May 2012, shortly before the awakening of Bob Dylan, celebrating Dylan's 71st birthday]}\\ \\
\end{document}